\def\bR {\mathbf{R}}
\def\bS {\mathbf{S}}
\def\bT {\mathbf{T}}
\def\cB {\mathcal{B}}
\def\cD {\mathcal{D}}
\def\cG {\mathcal{G}}
\def\cH {\mathcal{H}}
\def\cK {\mathcal{K}}
\def\cL {\mathcal{L}}
\def\cM {\mathcal{M}}
\def\cP {\mathcal{P}}
\def\cR {\mathcal{R}}
\def\a {{\alpha}}
\def\b {{\beta}}
\def\Ga {{\Gamma}}
\def\de {{\delta}}
\def\eps {{\epsilon}}
\def\th {{\theta}}
\def\l {{\lambda}}
\def\L {{\Lambda}}
\def\si {{\sigma}}
\def\Si {{\Sigma}}
\def\om {{\omega}}
\def\Om {{\Omega}}
\def\d {{\partial}}
\def\grad {{\nabla}}
\def\Dlt {{\Delta}}
\def\rstr {{\big |}}
\def\la {\langle}
\def\ra {\rangle}
\def \La {\bigg\langle}
\def \Ra {\bigg\rangle}
\def\Ma{{\hbox{M\!a}}}
\def\Rey{{\hbox{R\!e}}}
\newcommand{\Div}{\operatorname{div}}
\newcommand{\Sign}{\operatorname{sign}}
\newcommand{\Supp}{\operatorname{supp}}
\newcommand{\Dist}{\operatorname{dist}}
\newcommand{\ba}{\begin{aligned}}
\newcommand{\ea}{\end{aligned}}
\newcommand{\be}{\begin{equation}}
\newcommand{\ee}{\end{equation}}
\newcommand{\lb}{\label}
\newtheorem{Thm}{Theorem}[section]
\newtheorem{Prop}[Thm]{Proposition}
\newtheorem{Lem}[Thm]{Lemma}
\begin{document}

\title[From Boltzmann to Euler with Boundaries]{From the Boltzmann Equation\\ to the Euler Equations\\ in the Presence of Boundaries}

\author[F. Golse]{Fran\c cois Golse}
\address[F.G.]{Ecole Polytechnique, Centre de Math\'ematiques L. Schwartz, 91128 Palaiseau Cedex France \& Universit\'e Paris-Diderot, Laboratoire 
J.-L. Lions, BP187, 4 place Jussieu, 75252 Paris Cedex 05 France}
\email{francois.golse@math.polytechnique.fr}

\begin{abstract}
The fluid dynamic limit of the Boltzmann equation leading to the Euler equations for an incompressible fluid with constant density in the presence 
of material boundaries shares some important features with the better known inviscid limit of the Navier-Stokes equations. The present paper slightly 
extends recent results from [C. Bardos, F. Golse, L. Paillard, Comm. Math. Sci., \textbf{10} (2012), 159--190] to the case of boundary conditions for the 
Boltzmann equation more general than Maxwell's accomodation condition.
\end{abstract}

\keywords{Navier-Stokes equations; Euler equations; Boltzmann equation; Fluid dynamic limit; Inviscid limit; Slip coefficient; Gas-surface interaction;
Scattering kernel; Relative entropy method}

\subjclass{35Q30, 82B40; (76D05, 76B99)}

\maketitle


\section{The Inviscid Limit of the Navier-Stokes Equations with Dirichlet Condition}\lb{S-NS>E}


Consider the flow of an incompressible fluid with constant density confined in a domain $\Om\subset\bR^N$ (in practice, $N=2$ or $3$), with smooth
boundary $\d\Om$. The dimensionless form of the Navier-Stokes equations governing the velocity field of the fluid $u_\eps\equiv u_\eps(t,x)\in\bR^N$ 
is
\be\lb{NSDir}
\left\{
\ba
{}&\Div_xu_\eps=0\,,
\\
&\d_tu_\eps+\Div_x(u_\eps\otimes u_\eps)+\grad_xp_\eps=\eps\Dlt_xu_\eps\,,\qquad x\in\Om\,,\,\,t>0\,,
\\
&u_\eps\rstr_{\d\Om}=0\,,
\\
&u_\eps\rstr_{t=0}=u^{in}\,,
\ea
\right.
\ee
where $\eps=\hbox{Re}^{-1}$ is the reciprocal Reynolds number of the flow. The Dirichlet boundary condition $u_\eps\rstr_{\d\Om}=0$ means that
the fluid does not slip on the boundary $\d\Om$ of the domain.

An outstanding problem in fluid dynamics is to understand the behavior of $u_\eps$ in the vanishing $\eps$ limit, and especially to decide whether, 
in that limit, $u_\eps$ converges to the solution of the Euler equations
\be\lb{Eul}
\left\{
\ba
{}&\Div_xu=0\,,
\\
&\d_tu+\Div_x(u\otimes u)+\grad_xp=0\,,\qquad x\in\Om\,,\,\,t>0\,,
\\
&u\cdot n\rstr_{\d\Om}=0\,,
\\
&u\rstr_{t=0}=u^{in}\,,
\ea
\right.
\ee
where $n$ is the unit outward normal field on $\d\Om$.

In the case of the Cauchy problem set in either the whole Euclidian space $\bR^N$ or the periodic box $\bT^N$, i.e. in the absence of material 
boundaries, the Navier-Stokes solutions converge to the solution of the Euler equations as long as the latter remains smooth --- see for instance 
Propositions 4.1 and 4.2 in \cite{LionsBook1}.  We recall that, if $N=2$ and $u^{in}\in C^{k,\a}$ (i.e. either $C^{k,\a}(\bR^N)$ or $C^{k,\a}(\bT^N)$) 
for some $k\ge 1$ and $\a\in(0,1)$, there exists a unique \textit{global} solution $u\in C(\bR_+;C^{k,\a})$ of the Euler equations --- see for instance 
Theorem 4.1 in \cite{LionsBook1} --- while, if $N=3$ and if $u^{in}\in C^{1,\a}$, there exists a unique \textit{local} solution $u\in C([0,T);C^{1,\a})$ 
for some $T\in(0,+\infty]$, and it is unknown at the time of this writing whether $T=+\infty$. Thus in the absence of material boundaries, the inviscid 
limit of the Navier-Stokes equations is described by the Euler equations, globally in time if $N=2$, and maybe only locally in time if $N=3$.

The situation is completely different in a domain with material boundaries. Even when (\ref{Eul}) has a smooth solution, this solution might fail to attract
$u_\eps$ in the limit as $\eps\to 0$. The reason for such a behavior is of course that the Dirichlet condition satisfied by the Navier-Stokes solutions 
$u_\eps$ for each $\eps>0$ is overdetermined for the Euler equations. Indeed, in general, the tangential component of the solution $u$ of the Euler 
equations does not vanish on $\d\Om$. 

Thus one may seek to match the Euler solution to the Dirichlet condition on the boundary with a viscous boundary layer of thickness $\sqrt{\eps}$ as 
proposed by Prandtl --- see for instance \cite{Schlichting}. However it may not always be possible to do so. Consider for instance the flow of a viscous 
fluid past a cylinder or a sphere. In other words, assume that $\Om$ is the complement in $\bR^2$ of a circular cylinder --- or the complement in $\bR^3$ 
of a ball, and that the velocity field is constant at infinity on one side of the immersed body. Although there is no mathematical proof of this fact to this date, 
it is expected, on the basis of experiments and numerical simulations that, already at moderate Reynolds numbers, the viscous boundary layer detaches 
from the boundary and that vortices form in the wake past the immersed body (a phenomenon known as a ``von Karman street''). Yet there exist perfectly
smooth solutions of the Euler equations corresponding to the potential flow of an incompressible fluid past a sphere in space dimension 3 --- see for instance \S 10, Problem 2 in \cite{Landau6}.

Not much is known on Prandtl's boundary layer analysis for this problem from the mathematical viewpoint, apart from some positive results based 
on the Cauchy-Kovalevski theorem \cite{CaflSamm1,CaflSamm2}, as well as negative results concerning the Prandtl boundary layer equations 
\cite{EEngqu,Gren}. There is however the following very interesting criterion due to Kato \cite{KatoBL}.

\begin{Thm}[Kato]
Assume that 
$$
\int_\Om|u^{in}(x)|^2dx<\infty
$$
and denote 
$$
\d\Om_\eps:=\{x\in\Om\,|\,\Dist(x,\d\Om)<\eps\}\,.
$$
Then
$$
\int_0^T\int_{\Om}|u_\eps-u|^2dxdt\to 0\quad\Leftrightarrow\quad\eps\int_0^T\int_{\d\Om_\eps}|\grad_xu_\eps|^2dxdt\to 0
$$
as $\eps\to 0$.
\end{Thm}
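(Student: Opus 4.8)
The plan is to run a relative (modulated) energy estimate comparing a Leray solution $u_\eps$ of (\ref{NSDir}) with the smooth Euler solution $u$ of (\ref{Eul}), to isolate the single boundary term that the no-slip/slip mismatch produces, and to convert that term into a volume integral over $\d\Om_\eps$ by means of a divergence-free boundary corrector. Throughout I assume the energy inequality $\tfrac12\|u_\eps(t)\|_{L^2}^2+\eps\int_0^t\|\grad_xu_\eps\|_{L^2}^2\,ds\le\tfrac12\|u^{in}\|_{L^2}^2$ for $u_\eps$, the existence of the smooth Euler solution $u$ on $[0,T]$ (guaranteed by the results recalled above) together with its energy conservation $\|u(t)\|_{L^2}=\|u^{in}\|_{L^2}$, and matching initial data.

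First I would differentiate $\tfrac12\|u_\eps-u\|_{L^2}^2$. Writing $w=u_\eps-u$ and inserting both equations, all pressure contributions drop (because $\Div_xu=\Div_xu_\eps=0$ and $u\cdot n=u_\eps\cdot n=0$ on $\d\Om$), the convective terms collapse to the standard quantity $\int_\Om(w\otimes w):\grad_xu\,dx$ (using $\Div_xw=0$ and $w\cdot n=0$ on $\d\Om$), and the only surviving boundary contribution comes from the viscous term. After integrating in time, with $w\rstr_{t=0}=0$, this yields
\[
\ba
\tfrac12\|w(t)\|_{L^2}^2+\eps\int_0^t\|\grad_xu_\eps\|_{L^2}^2\,ds
={}&\eps\int_0^t\la\grad_xu_\eps,\grad_xu\ra\,ds-\eps\int_0^t\!\!\int_{\d\Om}(n\cdot\grad_xu_\eps)\cdot u\,d\si\,ds\\
&-\int_0^t\!\!\int_\Om(w\otimes w):\grad_xu\,dx\,ds.
\ea
\]
The first term on the right is $O(\sqrt\eps)$ by Cauchy--Schwarz and the dissipation bound, while the last is $\le\|\grad_xu\|_{L^\infty}\int_0^t\|w\|_{L^2}^2\,ds$, poised for Gr\"onwall.

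The heart of the matter is the boundary term. Here I would build, for each $t$, a divergence-free field $\phi_\eps$ supported in $\overline{\d\Om_\eps}$ with trace $\phi_\eps\rstr_{\d\Om}=u\rstr_{\d\Om}$ and bounds $\|\phi_\eps\|_{L^\infty}+\|\d_t\phi_\eps\|_{L^\infty}\le C$ and $\|\grad_x\phi_\eps\|_{L^\infty}\le C/\eps$ (so that $\|\phi_\eps\|_{L^2},\|\d_t\phi_\eps\|_{L^2}=O(\sqrt\eps)$ and $\|\grad_x\phi_\eps\|_{L^2}=O(1/\sqrt\eps)$). Since $\phi_\eps$ matches $u$ on $\d\Om$ and vanishes on the inner face $\{\Dist(x,\d\Om)=\eps\}$, the divergence theorem on the layer gives $\int_{\d\Om}(n\cdot\grad_xu_\eps)\cdot u\,d\si=\int_{\d\Om_\eps}(\Dlt_xu_\eps\cdot\phi_\eps+\grad_xu_\eps:\grad_x\phi_\eps)\,dx$. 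Multiplying by $\eps$, using the momentum equation to replace $\eps\Dlt_xu_\eps=\d_tu_\eps+\Div_x(u_\eps\otimes u_\eps)+\grad_xp_\eps$, and integrating by parts, the pressure piece vanishes (because $\Div_x\phi_\eps=0$, $\phi_\eps\cdot n=0$ on $\d\Om$, $\phi_\eps=0$ on the inner face). The decisive remaining term is $\eps\int_0^t\int_{\d\Om_\eps}\grad_xu_\eps:\grad_x\phi_\eps$, which Cauchy--Schwarz and $\|\grad_x\phi_\eps\|_{L^2}^2\le C/\eps$ bound by $\sqrt{CT}\,\big(\eps\int_0^T\int_{\d\Om_\eps}|\grad_xu_\eps|^2\big)^{1/2}$, precisely the square root of Kato's quantity. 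The quadratic term $-\int_0^t\int_{\d\Om_\eps}(u_\eps\otimes u_\eps):\grad_x\phi_\eps$ is controlled, via $|\grad_x\phi_\eps|\le C/\eps$ and the Hardy/Poincar\'e inequality $\int_{\d\Om_\eps}|u_\eps|^2\le C\eps^2\int_{\d\Om_\eps}|\grad_xu_\eps|^2$ (valid since $u_\eps\rstr_{\d\Om}=0$), by $C\eps\int_0^t\int_{\d\Om_\eps}|\grad_xu_\eps|^2$, i.e. $C$ times Kato's quantity; and the $\d_tu_\eps$ term, after integration by parts in $t$, is $O(\sqrt\eps)$ from the $L^2$ bounds on $\phi_\eps$ and $\d_t\phi_\eps$.

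The equivalence then follows. For the implication $\Leftarrow$, if $\eps\int_0^T\int_{\d\Om_\eps}|\grad_xu_\eps|^2\to0$ then every term on the right of the identity above is $o(1)$ except the relative-entropy term, so $\tfrac12\|w(t)\|_{L^2}^2\le o(1)+\|\grad_xu\|_{L^\infty}\int_0^t\|w\|_{L^2}^2\,ds$, and Gr\"onwall gives $\sup_{[0,T]}\|w(t)\|_{L^2}\to0$, hence $L^2$ convergence. For $\Rightarrow$, if $u_\eps\to u$ in $L^2([0,T]\times\Om)$ then $\|u_\eps(t)\|_{L^2}\to\|u(t)\|_{L^2}=\|u^{in}\|_{L^2}$ along a subsequence for a.e.\ $t$ (and at $t=T$ by weak continuity in time), so the energy inequality forces the \emph{total} dissipation $\eps\int_0^T\|\grad_xu_\eps\|_{L^2}^2\,ds\to0$, a fortiori its restriction to $\d\Om_\eps$. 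I expect the main obstacle to be the construction of the corrector $\phi_\eps$ --- divergence-free, with the prescribed tangential trace, supported in the $\eps$-layer, and obeying the sharp gradient bound $|\grad_x\phi_\eps|\le C/\eps$ --- together with the rigorous justification of the above integrations by parts at the limited regularity of Leray solutions.
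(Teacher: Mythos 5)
The paper does not actually prove this theorem --- it is quoted from Kato \cite{KatoBL} --- but your proposal is a faithful reconstruction of Kato's original argument, down to the two ingredients the paper itself singles out in its concluding section: a divergence-free corrector supported in the $\eps$-layer (deliberately different from a Prandtl layer of thickness $\sqrt{\eps}$) and the Hardy/Poincar\'e inequality in the normal direction, with the cross term $\eps\int\grad_xu_\eps:\grad_x\phi_\eps$ controlled by the square root of Kato's quantity and the convective term by the quantity itself. The only points to tighten are that for a Leray solution the trace $n\cdot\grad_xu_\eps\rstr_{\d\Om}$ is not defined, so one should test the weak formulation directly against $u-\phi_\eps$ (which vanishes on $\d\Om$) rather than pass through the surface integral and back, and that the resulting relation is an inequality rather than an identity --- neither change affects any of your estimates.
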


In other words, the convergence of the Navier-Stokes solutions $u_\eps$ to $u$ as $\eps\to 0$ in quadratic mean everywhere in $\Om$ is equivalent 
to the vanishing of the viscous energy dissipation in a thin layer near the boundary. The convergence of the Navier-Stokes solutions to a solution of the
Euler equations is therefore a strongly nonlocal phenomenon. Notice that, while the Prandtl viscous boundary layer has thickness $\sqrt{\eps}$, Kato's 
criterion involves the vanishing of viscous dissipation in a much thinner sublayer, of thickness $\eps$. On the other hand, while Prandtl's theory is based 
on the construction of a multiscale asymptotic expansion for $u_\eps$, Kato's result is based on an energy estimate -- so that, in theory, it might happen
that Prandtl's construction breaks down while $u_\eps$ converges to $u$ in quadratic mean. (We are however unaware of examples of such flows.)


\section{The Inviscid Limit of the Navier-Stokes Equations\\ with Slip Boundary Condition}\lb{S-NS2>E}


To confirm the role of the Dirichlet boundary condition as a source of difficulties in the inviscid limit, we supplement the Navier-Stokes equations with
a more general class of boundary conditions, known as ``slip boundary conditions'', which take the form
\be\lb{SlipBC}
u_\eps\cdot n\rstr_{\d\Om}=0\,,\quad\eps(\Si(u_\eps)\cdot n)_\tau+\l u_\eps\rstr_{\d\Om}=0\,,
\ee
where $\l$ is a scalar (the slip coefficient), and where $\Si(u_\eps)$ denotes the deformation tensor. In other words, for each vector field $v$ defined 
on a neighborhood of $\overline\Om$,
\be\lb{DefSi}
\Si(v):=\grad_xv+(\grad_xv)^T\,,
\ee
while the subscript $\tau$ denotes the tangential component on the boundary: 
\be\lb{Def-vtau}
v_\tau:=v-(v\cdot n)n\,.
\ee
Since $u_\eps\cdot n\rstr_{\d\Om}=0$, one has
$$
(\Si(u_\eps)\cdot n)_\tau\rstr_{\d\Om}=\left(\frac{\d u_\eps}{\d n}\right)_\tau\bigg|_{\d\Om}-(\grad n)\cdot u_\eps\bigg|_{\d\Om}
$$
so that $(\Si(u_\eps)\cdot n)_\tau\rstr_{\d\Om}$ and $\left(\frac{\d u_\eps}{\d n}\right)_\tau\bigg|_{\d\Om}$ differ by a lower order term --- of order $0$,
in the sense of differential operators --- involving the Weingarten endomorphism $\grad n$ acting on the tangent space of $\d\Om$.

Henceforth we assume that the slip coefficient $\l$ depends on the Reynolds number $\eps^{-1}$, and denote it by $\l\equiv\l_\eps$. Thus

a) if $\l_\eps\ge\l_0>0$ for all $\eps>0$, one expects that the slip boundary condition (\ref{SlipBC}) should be asymptotically equivalent to the Dirichlet
condition, thereby leading to the same difficulties as regards the inviscid limit;

b) if $\l_\eps=0$, the slip boundary condition (\ref{SlipBC}) reduces to the Navier full slip condition
\be\lb{FullSlip}
u_\eps\cdot n\rstr_{\d\Om}=0\,,\quad(\Si(u_\eps)\cdot n)_\tau\rstr_{\d\Om}=0\,.
\ee
As is well known, the Navier full slip condition prevents the detachment of viscous boundary layers, so that the inviscid limit of the Navier-Stokes 
equations supplemented with that condition is indeed governed by the Euler equations, whenever there exists a smooth solution of these equations --- 
see for instance \cite{BardosEuler} or step 2 in the proof of Theorem 4.1 in \cite{LionsBook1}.

In fact, a more general result is true. Consider the Navier-Stokes equations supplemented with the slip boundary condition
\be\lb{NSSlip}
\left\{
\ba
{}&\Div_xu_\eps=0\,,
\\
&\d_tu_\eps+\Div_x(u_\eps\otimes u_\eps)+\grad_xp_\eps=\eps\Dlt_xu_\eps\,,\quad x\in\Om\,,\,\,t>0\,,
\\
&u_\eps\!\cdot\! n\rstr_{\d\Om}\!=\!0\,,\quad\!\eps(\Si(u_\eps)\!\cdot\! n)_\tau\!+\!\l_\eps u_\eps\rstr_{\d\Om}=0\,,
\\
&u_\eps\rstr_{t=0}=u^{in}\,.
\ea
\right.
\ee
The existence of global weak solutions ``\`a la Leray'' of (\ref{NSSlip}) is classical: see for instance Theorem 2 in \cite{IftiSueur}. 

Henceforth we use the classical notation $\cH(\Om)$ to designate the set of solenoidal vector fields on $\Om$ that are tangent to $\d\Om$ and have
finite square mean in $\Om$, i.e.
$$
\cH(\Om):=\{v\in L^2(\Om)\,|\,\Div v=0\hbox{ and }v\cdot n\rstr_{\d\Om}=0\}\,.
$$

 \begin{Thm}[Bardos-Golse-Paillard \cite{BGP}]\label{T-NS>Eul}
Let $u^{in}\in C^1_c(\Om)$ and for  each $\eps>0$ let $u_\eps$ be a Leray solution of (\ref{NSSlip}). Assume that the Euler equations (\ref{Eul}) 
have a local classical solution $u\in C^1_b([0,T]\times\Om)$ defined for some $T>0$ and satisfying
\be\lb{uH1}
\int_0^T\int_{\d\Om}|u(t,x)|^2dS(x)dt+\int_0^T\int_\Om|\Si(u)(t,x)|^2dxdt<\infty\,,
\ee
where $dS(x)$ is the surface element on $\d\Om$ if $N=3$ (resp. the length element if $N=2$). Then
$$
\l_\eps\to 0\quad\Rightarrow\quad\sup_{0\le t\le T}\int_\Om|u_\eps-u|^2(t,x)dx\to 0\,.
$$
\end{Thm}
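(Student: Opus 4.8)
The plan is to run a relative energy (modulated energy) argument, controlling
\be\lb{RelE}
E_\eps(t):=\tfrac12\int_\Om|u_\eps-u|^2(t,x)\,dx
\ee
by Gr\"onwall's lemma. Since $u_\eps$ is merely a Leray solution, $E_\eps$ cannot be differentiated directly; instead I would expand
\be\lb{Expand}
E_\eps(t)=\tfrac12\int_\Om|u_\eps|^2dx-\int_\Om u_\eps\cdot u\,dx+\tfrac12\int_\Om|u|^2dx
\ee
and treat the three pieces separately. The first is bounded above by the energy inequality satisfied by the Leray solution of (\ref{NSSlip}),
\be\lb{EnIneq}
\tfrac12\int_\Om|u_\eps(t)|^2dx+\tfrac\eps2\int_0^t\!\!\int_\Om|\Si(u_\eps)|^2dxds+\l_\eps\int_0^t\!\!\int_{\d\Om}|u_\eps|^2dSds\le\tfrac12\int_\Om|u^{in}|^2dx\,,
\ee
in which the friction term is produced by inserting the slip condition into the energy balance. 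The last term in (\ref{Expand}) is constant in time, since smooth solutions of (\ref{Eul}) conserve kinetic energy. The cross term is computed by using the smooth, solenoidal, boundary-tangent field $u$ as a test function in the weak formulation of (\ref{NSSlip}); here the slip condition again converts the viscous boundary flux $\eps(\Si(u_\eps)\cdot n)_\tau$ into $-\l_\eps u_\eps$.

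Combining the three inputs, the $\tfrac12\int_\Om|u^{in}|^2$ contributions cancel, and using $\d_tu=-(u\cdot\grad_x)u-\grad_xp$ together with $\Div u_\eps=\Div u=0$ and $u_\eps\cdot n=u\cdot n=0$ (which annihilate all pressure terms) I expect to reach a relative energy inequality of the form
\be\lb{RelEIneq}
\ba
E_\eps(t)\le{}&-\tfrac\eps2\int_0^t\!\!\int_\Om|\Si(u_\eps)|^2-\l_\eps\int_0^t\!\!\int_{\d\Om}|u_\eps|^2-\int_0^t\!\!\int_\Om w\cdot(w\cdot\grad_x)u
\\
&+\tfrac\eps2\int_0^t\!\!\int_\Om\Si(u_\eps):\Si(u)+\l_\eps\int_0^t\!\!\int_{\d\Om}u_\eps\cdot u\,,
\ea
\ee
where $w:=u_\eps-u$. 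The crucial point is the convective term: writing $\Div(u_\eps\otimes u_\eps)-\Div(u\otimes u)=(u\cdot\grad_x)w+(w\cdot\grad_x)u+(w\cdot\grad_x)w$ and testing against $w$, the contributions of $(u\cdot\grad_x)w$ and $(w\cdot\grad_x)w$ vanish after integration by parts (using $\Div u=\Div w=0$ and $u\cdot n=w\cdot n=0$), leaving only $-\int_\Om w\cdot(w\cdot\grad_x)u$, bounded by $\|\grad_xu\|_{L^\infty}\int_\Om|w|^2=2\|\grad_xu\|_{L^\infty}E_\eps$. This is the Gr\"onwall factor, and it is finite precisely because $u\in C^1_b$.

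It remains to dispose of the two ``bad'' terms on the right of (\ref{RelEIneq}), and this is exactly where the hypotheses $\l_\eps\to0$ and (\ref{uH1}) enter. By Young's inequality, $\tfrac\eps2\Si(u_\eps):\Si(u)\le\tfrac\eps4|\Si(u_\eps)|^2+\tfrac\eps4|\Si(u)|^2$, so the first half is absorbed by the dissipation $-\tfrac\eps2\int|\Si(u_\eps)|^2$ while the remainder is at most $\tfrac\eps4\int_0^T\!\int_\Om|\Si(u)|^2\to0$ by (\ref{uH1}); likewise $\l_\eps u_\eps\cdot u\le\tfrac{\l_\eps}2|u_\eps|^2+\tfrac{\l_\eps}2|u|^2$ is absorbed into $-\l_\eps\int_{\d\Om}|u_\eps|^2$ up to the remainder $\tfrac{\l_\eps}2\int_0^T\!\int_{\d\Om}|u|^2\to0$, since $\l_\eps\to0$ and the trace integral is finite by (\ref{uH1}). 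Dropping the nonpositive dissipation, I would obtain $E_\eps(t)\le R_\eps+2\|\grad_xu\|_{L^\infty}\int_0^tE_\eps(s)\,ds$ with $R_\eps\to0$, whence $\sup_{0\le t\le T}E_\eps(t)\le R_\eps e^{2T\|\grad_xu\|_{L^\infty}}\to0$, recalling $E_\eps(0)=0$ because $u_\eps$ and $u$ share the initial datum $u^{in}$. The main obstacle is not the algebra but its rigorous justification for a weak solution: the entire computation must be realized through the weak formulation and the energy inequality (\ref{EnIneq}) rather than by pointwise differentiation, and one must check that $u$ is an admissible test function and that the trace of $u_\eps$ on $\d\Om$ entering the friction terms is well defined---here Korn's inequality, together with the control of $\int|\Si(u_\eps)|^2$ in (\ref{EnIneq}), is what renders the boundary integrals meaningful.
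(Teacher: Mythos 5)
Your proposal is correct and follows essentially the same relative-energy (weak--strong) argument as the paper: the Leray energy inequality, the weak formulation tested against the smooth Euler solution, cancellation of the pressure and of the initial energies, and Gr\"onwall with the factor $\|\grad_xu\|_{L^\infty}$. The only (harmless) difference is in disposing of the cross terms: you absorb half of $\tfrac{\eps}{2}\Si(u_\eps):\Si(u)$ and $\l_\eps u_\eps\cdot u$ into the dissipation and friction via Young's inequality, getting a remainder $O(\eps+\l_\eps)$, whereas the paper applies Cauchy--Schwarz in space-time against the $O(1)$ bound from the Leray inequality and (\ref{uH1}), getting $O(\sqrt{\eps})+O(\sqrt{\l_\eps})$; both suffice.
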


The proof will serve as a model in the case of the Boltzmann equation; it is given in detail below.

\begin{proof}
Leray solutions of (\ref{NSSlip}) satisfy the energy inequality
\be\lb{Leray<}
\ba
\tfrac12\int_\Om|u_\eps(t,x)|^2dx+\eps\int_0^t\int_\Om\tfrac12|\Si(u_\eps)(s,x)|^2dxds+\l_\eps\int_0^t\int_{\d\Om}|u_\eps(s,x)|^2dS(x)ds&
\\
\le\tfrac12\int_\Om|u^{in}(x)|^2dx&\,.
\ea
\ee
On the other hand, for each test vector field $w\in C^\infty_c(\bR_+\times\overline\Om)\cap C(\bR_+;\cH(\Om))$
$$
\ba
\int_\Om u_\eps\cdot&w(t,x)dx-\int_\Om u^{in}(x)\cdot w(0,x)dx
\\
&+\eps\int_0^t\int_\Om\tfrac12\Si(u_\eps):\Si(w)(s,x)dxds+\l_\eps\int_0^t\int_{\d\Om}u_\eps\cdot w(s,x)dS(x)ds
\\
&=
\int_0^t\int_\Om u_\eps\cdot E(w)(s,x)dxds+\int_0^t\int_\Om\grad_xw:(u_\eps-w)^{\otimes 2}(s,x)dxds
\ea
$$
where
\be\lb{DefE}
E(w):=\d_tw+w\cdot\grad_xw\,.
\ee
Therefore, with the identity
$$
\frac{d}{dt}\int_\Om\tfrac12|w(t,x)|^2dx=\int_\Om w\cdot E(w)(t,x)dx\,,
$$
we conclude that
\be\lb{DiffEner}
\ba
\tfrac12\int_\Om|u_\eps-w|^2(t,x)dx-\tfrac12\int_\Om|u^{in}(x)-w(0,x)|^2dx&
\\
+\eps\int_0^t\int_\Om\tfrac12|\Si(u_\eps)(s,x)|^2dxds+\l_\eps\int_0^t\int_{\d\Om}|u_\eps(s,x)|^2dS(x)ds&
\\
\le\eps\int_0^t\int_\Om\tfrac12\Si(u_\eps):\Si(w)(s,x)dxds+\l_\eps\int_0^t\int_{\d\Om}u_\eps\cdot w(s,x)dS(x)ds&
\\
-\int_0^t\int_\Om u_\eps\cdot E(w)(s,x)dxds-\int_0^t\int_\Om\grad_xw:(u_\eps-w)^{\otimes 2}(s,x)dxds&\,.
\ea
\ee
By a straightforward density argument, we may replace $w$ with the solution $u$ of (\ref{Eul}): thus
$$
\ba
\int_0^t\int_\Om u_\eps\cdot E(u)(s,x)dxds&=-\int_0^t\int_\Om u_\eps\cdot\grad_xp(s,x)dxds
\\
&=-\int_0^t\int_{\d\Om}pu_\eps(s,x)\cdot n_xdS(x)ds=0\,,
\ea
$$
so that
\be\lb{DiffEner2}
\ba
\tfrac12\int_\Om|u_\eps-u|^2(t,x)dx
\!+\!\eps\int_0^t\int_\Om\tfrac12|\Si(u_\eps)(s,x)|^2dxds\!+\!\l_\eps\int_0^t\int_{\d\Om}|u_\eps(s,x)|^2dS(x)ds&
\\
\le\eps\int_0^t\int_\Om\tfrac12\Si(u_\eps):\Si(u)(s,x)dxds+\l_\eps\int_0^t\int_{\d\Om}u_\eps\cdot u(s,x)dS(x)ds&
\\
+\si(u)\int_0^t\int_\Om|u_\eps-u|^2(s,x)dxds&\,,
\ea
\ee
where
$$
\si(u):=\sup_{0\le t\le T\atop x\in\Om}|\grad_xu(t,x)|\,.
$$
Define
$$
Q_\eps(t):=\eps\int_\Om\tfrac12|\Si(u_\eps)(t,x)||\Si(u)(t,x)|dx+\l_\eps\int_{\d\Om}|u_\eps(t,x)||u(t,x)|dS(x)\,.
$$
By the Leray energy inequality
$$
\eps\int_0^T\int_\Om\tfrac12|\Si(u_\eps)(s,x)|^2dxds+\l_\eps\int_0^T\int_{\d\Om}|u_\eps(s,x)|^2dS(x)ds=O(1)
$$
so that, by (\ref{uH1}) and the Cauchy-Schwarz inequality,
$$
\int_0^TQ_\eps(t)dt=O(\sqrt{\eps})+O(\sqrt{\l_\eps}).
$$
Applying Gronwall's lemma shows that
$$
\int_\Om\tfrac12|u_\eps-u|^2(t,x)dx\le e^{T\si(u)}\int_0^TQ_\eps(s)ds\to 0
$$
as $\eps\to 0$.
\end{proof}


\section{The Incompressible Euler Limit of the Boltzmann Equation}


\subsection{Background on the Boltzmann Equation}

The Boltzmann equation governs the distribution function of molecules in a monatomic gas, denoted here by $F\equiv F(t,x,v)$. We recall that $F(t,x,v)$
is the density with respect to the phase space measure $dxdv$ of gas molecules located at the position $x\in\bR^3$ with velocity $v\in\bR^3$ at time $t$.
It takes the form
$$
(\d_t+v\cdot\grad_x)F=\cB(F,F)
$$
where $\cB(F,F)$ denotes the Boltzmann collision integral. In the case where gas molecules behave like hard spheres subject to elastic, binary collisions, 
the collision integral takes the (dimensionless) form
$$
\cB(F,F):=\iint_{\bR^3\times\bS^2}(F'F'_*-FF_*)|(v-v_*)\cdot\om|dv_*d\om\,.
$$
Here we have used the notation
$$
\left\{
\ba
F&\equiv F(t,x,v)\,,\quad &&F_*\equiv F(t,x,v_*)\,,
\\
F'&\equiv F(t,x,v')\,,\quad &&F'_*\equiv F(t,x,v'_*)\,,
\ea
\right.
$$
where $v'$ and $v'_*$ are defined in terms of $v,v_*\in\bR^3$ and $\om\in\bS^2$ by the formulas
$$
\left\{
\ba
v'&\equiv v'(v,v_*,\om)\,:=v\,-(v-v_*)\cdot\om\om\,,
\\
v'_*&\equiv v'_*(v,v_*,\om):=v_*\!+(v-v_*)\cdot\om\om\,.
\ea
\right.
$$

The Boltzmann collision integral enjoys the following well known properties. 

First it satisfies the local conservation laws of mass, momentum and energy, in the following form: if $F$ is rapidly decaying as $|v|\to\infty$, then 
\be\lb{ConsBCollInt}
\int_{\bR^3}\cB(F,F)\left(\begin{matrix}1\\ v_k\\ \tfrac12|v|^2\end{matrix}\right)dv=0\,,\quad k=1,2,3.
\ee

Another fundamental property of the collision integral is Boltzmann's H Theorem, which can be stated as follows: if $F>0$ is rapidly decaying in $v$ 
while $\ln F=O(|v|^n)$ for some $n\ge 0$, then
\be\lb{HThmCollInt}
\int_{\bR^3}\cB(F,F)\ln Fdv\le 0\,,
\ee
while 
$$
\int_{\bR^3}\cB(F,F)\ln Fdv\!=\!0\Leftrightarrow\cB(F,F)\!=\!0\Leftrightarrow F\!=\!\cM_{(\rho,u,\th)}\,,
$$
where the notation $\cM_{(\rho,u,\th)}$ designates the Maxwellian distribution with density $\rho\ge 0$, bulk velocity $u\in\bR^3$ and temperature 
$\th>0$, i.e.
$$
\cM_{(\rho,u,\th)}(v):=\frac{\rho}{(2\pi\th)^{3/2}}e^{-\frac{|v-u|^2}{2\th}}\,.
$$
We shall formulate below the corresponding differential relations for solutions of the Boltzmann equation.

\subsection{Boundary Conditions for the Boltzmann Equation}

A general class of boundary conditions for the Boltzmann equation is of the form
\be\lb{ScattBC}
F(t,x,v)|v\!\cdot\! n_x|=\int_{v'\!\cdot n_x>0}\!\!F(t,x,v')v'\!\cdot\! n_x\cK(x,\cR_xv,dv')\,, \quad(x,v)\in\Ga_-\,,
\ee
where we recall that the notation $n_x$ designates the unit outward normal at the point $x\in\d\Om$ while $\cR_x$ designates the specular reflection: 
$$
\cR_xv:=v-2v\cdot n_xn_x\,,
$$
and where we have used the notation
$$
\begin{aligned}
\Ga_+:=\{(x,v)\in\d\Om\times\bR^3\,|\,v\cdot n_x>0\}\,,
\\
\Ga_-:=\{(x,v)\in\d\Om\times\bR^3\,|\,v\cdot n_x<0\}\,.
\end{aligned}
$$
The measure-valued, scattering kernel $\cK(x,v,dv')\ge 0$ satisfies the assumptions (see \S 8.2 in \cite{CIP}, especially on pp. 230-231):

\smallskip
\noindent
(i) for each $x\in\d\Om$, one has
$$
\int_{v\cdot n_x>0}\cK(x,v,dv')dv=dv'\,;
$$
(ii) for each $x\in\d\Om$ and each $\Phi\in C_c(\bR^3\times\bR^3)$ one has
$$
\ba
\iint_{\bR^3\times\bR^3}\Phi(v,v')(v\cdot n_x)_+(v'\cdot n_x)_+\cM_{(1,0,\th)}(v)\cM_{(1,0,\th)}(v')dv\cK(x,\cR_xv,dv')
\\
=
\iint_{\bR^3\times\bR^3}\Phi(-v',-v)(v\cdot n_x)_+(v'\cdot n_x)_+\cM_{(1,0,\th)}(v)\cM_{(1,0,\th)}(v')dv\cK(x,\cR_xv,dv')
\ea
$$
(If $\cK(x,v,dv')$ is of the form $\cK(x,v,v')=K(x,v,v')dv'$, property (ii) is equivalent to the identity 
$$
K(x,v,v')=K(x,v',v)\qquad\hbox{ for a.e. }v,v'\hbox{ s.t. }v\cdot n_x>0\hbox{ and }v'\cdot n_x>0
$$
for each $x\in\d\Om$.)

\noindent
(iii) for each $x\in\d\Om$, one has
$$
\cM_{(1,0,\th)}(v)v\!\cdot\! n_x=\int_{v'\!\cdot n_x>0}\cM_{(1,0,\th)}(v')v'\!\cdot\! n_x\cK(x,v,dv')\,,\quad v\!\cdot\! n_x>0\,,
$$
if and only if $\th=\th_w(x)$, where $\th_w(x)$ is the temperature of the boundary $\d\Om$ at the point $x$.

With these notations, the case of diffuse reflection corresponds to
$$
\cK(x,v,dv'):=\frac{\cM_{(1,0,\th_w)}(v)|v\cdot n_x|dv'}{\displaystyle\int_{u\cdot n_x<0}\cM_{(1,0,\th_w)}(u)|u\cdot n_x|du}\,.
$$
The case of an accommodation boundary condition (i.e. the so-called Maxwell-type condition considered in \cite{BGP}) corresponds to
$$
\begin{aligned}
\cK(x,v,dv'):=(1-\a(x))\de(v'-v)+\a(x)\frac{\cM_{(1,0,\th_w)}(v)|v\cdot n_x|dv'}{\displaystyle\int_{u\cdot n_x<0}\cM_{(1,0,\th_w)}(u)|u\cdot n_x|du}\,,
\end{aligned}
$$
with $0<\a(x)<1$. The case $\a\equiv 1$ corresponds to diffuse reflection, while the case $\a\equiv 0$ corresponds to specular reflection --- notice 
that specular reflection does not satisfy assumption (ii).

The class of reflection kernels described here also includes the Cercignani-Lampis model, as well as all Nocilla models. We refer the interested 
reader to section 8.4 in \cite{CIP} (on pp. 235--239) for more information on this issue.

However, this is by no means the most general class of admissible boundary conditions for the Boltzmann equation. Indeed, the boundary condition
(\ref{ScattBC}) is local in the position variable $x$. There are also models of gas-surface interaction involving boundary conditions satisfied by the
distribution function of the gas that are nonlocal is both the position and velocity variables: see for instance \cite{LomCafSam1,LomCafSam2}. This
new class of boundary condition is not covered in the present study. 

\subsection{The Incompressible Euler Scaling for the Boltzmann Equation}

The incompressible Euler limit of the Boltzmann equation is based on three different scaling prescriptions (see for instance \cite{BGL0,BGL1}).
A careful description of the dimensionless form of the Boltzmann equation can be found in section 1 of \cite{BGL2}, as well as in sections
1.9 and 1.10 of \cite{SoneBk2}.

First, as in all fluid dynamic limits of kinetic models, one assumes a strongly collisional regime. In other words, the collision integral is scaled as
$$
\cB(F,F)=\frac1{\eps^{1+q}}\cB(F,F)\,,\quad\hbox{ with }\eps\ll 1\hbox{ and }q>0\,.
$$
(See \cite{BGL2} or \cite{SoneBk2} for the physical meaning of $\eps$ and $q$.)

Next, the incompressible Euler limit holds on a long time scale, leading to the introduction of a new time variable $t=\hat t/\eps$; in other words, 
we set
$$
F(t,x,v)=\hat F_\eps(\hat t,x,v)=\hat F_\eps(\eps t,x,v)\,.
$$

With these assumptions, the scaled Boltzmann equation becomes
$$
(\eps\d_{\hat t}+v\cdot\grad_x)\hat F_\eps=\frac1{\eps^{1+q}}\hat\cB(\hat F_\eps,\hat F_\eps)\,.
$$

A third and last scaling assumption used in the incompressible Euler limit of the Boltzmann equation is that the corresponding gas flow is kept in 
a low Mach number regime. Specifically, we assume that the Mach number is of order $\Ma=\eps$, and this is done by seeking the distribution
function $\hat F_\eps$ in the form
$$
\hat F_\eps=\cM_{(1,0,\th_w)}\hat G_\eps\quad\hbox{ with }\hat G_\eps=1+\eps\hat g_\eps
$$
(where it is implicitly assumed that $g_\eps=O(1)$).

We draw the reader's attention to the fact that the parameter $\eps$ considered here has a different meaning than in the inviscid limit of the
Navier-Stokes equations. Indeed, in the situation considered here, we shall see that the reciprocal Reynolds number is $\Rey^{-1}=\eps^q$ ---
instead of $\eps$ as in the previous section.

For simplicity, we henceforth drop all hats in the scaled Boltzmann equation and consider the initial boundary value problem
\be\lb{ScalBoltz}
\left\{
\begin{array}{l}
(\eps\d_t+v\cdot\grad_x)F_\eps=\displaystyle\frac1{\eps^{1+q}}\cB(F_\eps,F_\eps)\,,\quad(x,v)\in\Om\times\bR^3\,,
\\	\\
F_\eps(t,x,v)|v\!\cdot\! n_x|=\displaystyle\int_{v'\!\cdot n_x>0}\!F_\eps(t,x,v')v'\!\cdot\! n_x\cK_\eps(x,\cR_xv,dv')\,, \quad(x,v)\in\Ga_-\,,
\\	\\
F_\eps\rstr_{t=0}=\cM_{(1,\eps u^{in},1)}\,,
\end{array}
\right.
\ee
assuming that the boundary temperature $\th_w=1$. The reflection kernel $\cK_\eps$ satisfies the properties (i)-(ii) above and possibly depends on 
the scaling parameter $\eps$. 

Henceforth we denote 
$$
M:=\cM_{(1,0,1)}\,.
$$

Let us return to the conservation laws of mass, momentum and energy satisfied by the Boltzmann collision integral. If $F_\eps$ is a smooth solution 
of (\ref{ScalBoltz}) rapidly decaying as $|v|\to+\infty$, then it satisfies the system of differential identities
\be\lb{ConsLawDiff}
\left\{
\ba
\eps\d_t\int_{\bR^3}F_\eps dv+\Div_x\int_{\bR^3}vF_\eps dv=0&\,,
\\
\eps\d_t\int_{\bR^3}vF_\eps dv+\Div_x\int_{\bR^3}v\otimes vF_\eps dv=0&\,,
\\
\eps\d_t\int_{\bR^3}\tfrac12|v|^2F_\eps dv+\Div_x\int_{\bR^3}v\tfrac12|v|^2F_\eps dv=0&\,.
\ea
\right.
\ee
These identities can be viewed again as the differential form of the local conservation laws of mass momentum and energy that are classical in
continuum mechanics.

\subsection{Main result}

Our main result in this paper is an analogue of Theorem \ref{T-NS>Eul} for the incompressible Euler limit of the Boltzmann equation.

\begin{Thm}\lb{T-Boltz>Eul}
Let $u^{in}\in C^1_c(\overline{\Om};\bR^3)$ satisfy $\Div_xu^{in}=0$ and $u\cdot n\rstr_{\d\Om}=0$, and assume that the Euler equations (\ref{Eul}) 
have a local classical solution $u\in C^1_b([0,T]\times\overline{\Om})$ defined for some $T>0$ and satisfying (\ref{uH1}). For each $\eps>0$, let 
$F_\eps$ be a solution of (\ref{ScalBoltz}). Assume that the reflection kernel $\cK_\eps$ satisfies, in addition to the properties (i)-(iii) listed above, 
\be\lb{CondK}
\left|v'_\tau dv'-\int_{v\cdot n_x>0}v_\tau\cK_\eps(x,v,dv')dv\right|\le\a_\eps(x)|v'|dv'
\ee
and
\be\lb{Cond2K}
\cK_\eps(x,v,dv')\ge\a_\eps(x)\b_\eps(x)\sqrt{2\pi}M(v)(v\cdot n_x)_+dv'
\ee
for each $x\in\d\Om$ and a.e. $v,v'\in\bR^3$ such that $v\cdot n_x>0$ and $v'\cdot n_x>0$, where 
\be\lb{Cond-a}
0\le\a_\eps(x)\le 1\quad\hbox{ and }\quad\frac1\eps\sup_{x\in\d\Om\atop|x|\le R}\a_\eps(x)\to 0\qquad\hbox{ as }\eps\to 0
\ee
and
\be\lb{Cond-b}
0<\inf_{\eps>0}\inf_{x\in\d\Om\atop|x|\le R}\b_\eps(x)\le\b_\eps(x)\le 1
\ee
for each $R>0$. Then, for each $T>0$ and $R>0$
$$
\int_0^T\int_{x\in\Om\atop |x|\le R}\left|\frac1\eps\int_{\bR^3}vF_\eps(t,x,v) dv-u(t,x)\right|dxdt\to 0\hbox{ as }\eps\to 0\,.
$$
\end{Thm}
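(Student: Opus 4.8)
The plan is to transpose the modulated-energy argument of Theorem~\ref{T-NS>Eul} to the kinetic setting by means of the relative entropy method. The quantity playing the role of $\tfrac12\int_\Om|u_\eps-u|^2\,dx$ will be the relative entropy of $F_\eps$ with respect to the local Maxwellian $\cM_{(1,\eps u,1)}$ built on the Euler solution $u$, namely
$$
H_\eps(t):=\frac1{\eps^2}\int_\Om\int_{\bR^3}\left(F_\eps\ln\frac{F_\eps}{\cM_{(1,\eps u,1)}}-F_\eps+\cM_{(1,\eps u,1)}\right)(t,x,v)\,dv\,dx\,,
$$
the factor $1/\eps^2$ reflecting the low Mach number scaling, under which $F_\eps-M$ is expected to be of order $\eps$. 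Just as the Navier--Stokes proof begins from the Leray energy inequality~\eqref{Leray<}, I would begin from the entropy inequality satisfied by the solutions $F_\eps$ of~\eqref{ScalBoltz}: the interior viscous dissipation $\eps\int\tfrac12|\Si(u_\eps)|^2$ is then replaced by the entropy production of the collision integral, and the boundary dissipation $\l_\eps\int_{\d\Om}|u_\eps|^2$ by the Darroz\`es--Guiraud boundary entropy flux.

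First I would derive the differential relative entropy inequality. Differentiating $H_\eps$ in time and inserting~\eqref{ScalBoltz}, the collision integral contributes the nonpositive entropy production $\propto\int_\Om\int_{\bR^3}\cB(F_\eps,F_\eps)\ln F_\eps\,dv\,dx\le0$ of Boltzmann's H Theorem~\eqref{HThmCollInt}; the part involving $\ln\cM_{(1,\eps u,1)}$ drops out because $\ln\cM_{(1,\eps u,1)}$ is a collision invariant, by~\eqref{ConsBCollInt}. The transport operator $v\cdot\grad_x$ produces, after integration by parts, an interior flux together with a boundary integral over $\d\Om$. The modulation by the moving Maxwellian adds the acceleration terms generated by $\d_t u$ and $u\cdot\grad_x u$; using that $u$ solves~\eqref{Eul}, so that $\d_t u+u\cdot\grad_x u=-\grad_x p$ with $\Div_x u=0$, together with the local conservation laws~\eqref{ConsLawDiff}, these interior contributions collapse --- exactly as in the whole-space analysis of~\cite{BGL1,BGP} --- into a Gronwall term of the form $C\,\si(u)\,H_\eps(t)$ plus a flux remainder absorbed by the entropy production and controlled by~\eqref{uH1}.

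The main difficulty, absent from the whole-space theory, is the boundary, and this is where the hypotheses~\eqref{CondK}--\eqref{Cond-b} are designed to be used. The integration by parts leaves a boundary flux proportional to $\int_{\d\Om}\int_{\bR^3}(v\cdot n_x)F_\eps\ln\frac{F_\eps}{\cM_{(1,\eps u,1)}}\,dv\,dS(x)$, which must be shown to be dissipative up to a small defect. Writing $\cM_{(1,\eps u,1)}=M\,e^{\eps u\cdot v-\frac12\eps^2|u|^2}$ and using $u\cdot n\rstr_{\d\Om}=0$ splits this flux into (a)~a genuine relative entropy flux of $F_\eps$ against the wall Maxwellian $M$, and (b)~a term carrying the factor $\eps\,u\cdot v$ (the $\tfrac12\eps^2|u|^2$ piece drops because the normal mass flux $\int_{\bR^3}(v\cdot n_x)F_\eps\,dv$ vanishes at the wall). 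For~(a), properties (i)--(iii), and in particular the reciprocity relation~(ii), yield the Darroz\`es--Guiraud inequality showing this flux is $\le0$; the quantitative diffuse lower bound~\eqref{Cond2K}, together with the uniform positivity of $\b_\eps$ from~\eqref{Cond-b}, upgrades it into a \emph{coercive} control, with weight of order $\a_\eps\b_\eps$, on the tangential velocity fluctuation of $F_\eps$ at the boundary. For~(b), the only piece not absorbed is the tangential momentum friction, i.e.\ the defect between the outgoing tangential momentum and the reflected incoming one; this is exactly the quantity bounded in~\eqref{CondK} by $O(\a_\eps)$, so after the factor $\eps$ it enters with prefactor of order $\tfrac1\eps\a_\eps$ multiplying quantities controlled, through Cauchy--Schwarz and Young's inequality, by the coercive term of~(a). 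The hypothesis $\tfrac1\eps\a_\eps\to0$ of~\eqref{Cond-a} then renders this friction defect negligible, in exact analogy with the role of $\l_\eps\to0$ in Theorem~\ref{T-NS>Eul}.

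Collecting the interior and boundary estimates yields a differential inequality of the form $\tfrac{d}{dt}H_\eps(t)\le C\,\si(u)\,H_\eps(t)+R_\eps(t)$ with $\int_0^TR_\eps(t)\,dt\to0$ as $\eps\to0$. Since the initial data $\cM_{(1,\eps u^{in},1)}$ coincides with the reference Maxwellian at $t=0$, one has $H_\eps(0)=0$, and Gronwall's lemma gives $\sup_{[0,T]}H_\eps(t)\to0$. It remains to convert this into the asserted convergence of the renormalized momentum. Setting $g_\eps:=(F_\eps-M)/(\eps M)$, one has $\frac1\eps\int_{\bR^3}vF_\eps\,dv=\int_{\bR^3}v\,M g_\eps\,dv$ because $\int_{\bR^3}vM\,dv=0$, while the relative entropy modulated by $\cM_{(1,\eps u,1)}$ controls, through the elementary convexity of $x\mapsto x\ln x-x+1$ and the usual handling of the large-velocity tails as in~\cite{BGL1}, a weighted $L^2(M\,dv\,dx)$ norm of $g_\eps-u\cdot v$. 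Hence $\int_{\bR^3}v\,M g_\eps\,dv\to u$ in $L^2$, and on the bounded set $\{|x|\le R\}$ a final Cauchy--Schwarz inequality upgrades this into the stated $L^1$ convergence in $dx\,dt$. This last passage from entropy to $L^1$ is where the velocity tails require the usual care, but --- unlike the interior/boundary balance --- it involves no boundary difficulty.
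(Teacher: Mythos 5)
Your overall architecture coincides with the paper's: modulate the entropy by $\cM_{(1,\eps u,1)}$, combine the relative form of the H theorem (\ref{RelHThm}) with the weak local conservation laws of mass and momentum, treat the interior terms exactly as in Saint-Raymond's whole-space argument, and reduce the boundary difficulty to absorbing the tangential momentum flux $\frac1{\eps^2}\int_{\d\Om\times\bR^3}F_\eps(v_\tau\cdot u_\tau)(v\cdot n_x)\,dS(x)dv$ into the Darroz\`es--Guiraud information, with (\ref{CondK})--(\ref{Cond-a}) supplying smallness and (\ref{Cond2K})--(\ref{Cond-b}) supplying coercivity. Your split of the boundary flux into the Darroz\`es--Guiraud part (a) and the friction part (b) is precisely the paper's decomposition, and your bookkeeping of the powers of $\eps$ and of the role of $\a_\eps/\eps\to0$ is correct.

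The one step you under-specify --- and where the tool you name would fail --- is the absorption of (b) into (a). Cauchy--Schwarz is not available here: the Darroz\`es--Guiraud information is an $h$-functional with $h(z)=(1+z)\ln(1+z)-z$, not a weighted $L^2$ norm of the boundary fluctuation $g_\eps-\L_xg_\eps$, so the duality must be Young's inequality for the Legendre pair $(h,h^*)$ with $h^*(p)=e^p-p-1$ (Lemma~\ref{L-Young<}, following \cite{BGL2}). This has two consequences your sketch does not account for. First, the dual term is an exponential moment, the constant $C(w,N)$ of (\ref{DefCwN}), multiplied by the outgoing mass flux $\L_x(F_\eps)=\sqrt{2\pi}\int_{\bR^3}F_\eps(v\cdot n_x)_+M\,dv$, which is \emph{not} a priori $O(1)$; it must in turn be estimated by the Darroz\`es--Guiraud information (again weighted by $1/\b_\eps$) plus the trace quantity $\int_{\bR^3}F_\eps(v\cdot n_x)^2dv$, whose uniform bound is borrowed from \cite{MasmLSR} --- this is Lemma~\ref{L-OutFlux}, an ingredient entirely absent from your outline. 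Second, the coercivity you claim from (\ref{Cond2K}) is not a direct consequence of Jensen's inequality applied to $\mu^\eps_{x,v}$: one must write $\cK_\eps=\a_\eps\b_\eps\sqrt{2\pi}M(v)(v\cdot n_x)_+dv'+(1-\a_\eps\b_\eps)\cL_\eps$, verify that the remainder kernel $\cL_\eps$ still satisfies the normalization and equilibrium properties, and apply Jensen twice to obtain $\a_\eps\b_\eps\,\L_x\big(h(\eps g_\eps)-h(\eps\L_xg_\eps)\big)\le\sqrt{2\pi}\,\cD\cG_\eps$ (Lemma~\ref{L-hL<DG}). Once these three lemmas are in place the $\eps$-powers close exactly as you predict and the remainder of your argument matches the paper.
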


This result was stated and proved in \cite{BGP} in the particular case where the reflection kernel $K_\eps$ corresponds with Maxwell's accomodation
condition at the boundary with accomodation parameter $\a_\eps$. 

In the case where the Boltzmann equation is set on a domain without boundary --- i.e. if $x$ runs through the Euclidian space $\bR^3$ or the periodic
box $\bT^3$ --- the first rigorous derivation of the incompressible Euler equations from the Boltzmann equation was obtained by L. Saint-Raymond
\cite{SRBoltzEul} following the relative entropy method sketched in \cite{BouGolPul,LionsMas2}. (For the simpler case of the BGK model, see also
\cite{SRBGKEul}.)

The proof of Theorem \ref{T-Boltz>Eul} is a slight generalization of the one in \cite{BGP} and is based on the same relative entropy method as in the 
work of L. Saint-Raymond \cite{SRBoltzEul}. It can also be viewed as the analogue for the Boltzmann equation of the proof of the inviscid limit Theorem
\ref{T-NS>Eul}. 

A final word of caution is in order. The statement of the theorem is left deliberately vague about the notion of solution of the Boltzmann equation 
(\ref{ScalBoltz}) to be considered. This is a rather technical matter, to be discussed later.

\subsection{The relative entropy inequality }

The first important property of the Boltzmann equation used in the proof is the following variant of Boltzmann's H theorem that can be viewed as an
analogue of the Leray energy inequality (\ref{Leray<}):
\be\lb{RelHThm}
\ba
\frac{1}{\eps^2}H(F_\eps|M)(t)&+\frac{1}{\eps^{4+q}}\int_0^t\int_\Om\cP_\eps(s,x)dxds
\\
&+\frac{1}{\eps^3}\int_0^t\int_\Ga\cD\cG_\eps(s,x)dxds\le\frac{1}{\eps^2}H(F_\eps|M)(0)
\ea
\ee
where the notation $H(F_1|F_2)$ designates the relative entropy defined as follows. 

Let $F_1\ge 0$ and $F_2>0$ a.e. on $\Om\times\bR^3$ designate two measurable functions, then
$$
\ba
H(F_1|F_2):&=\iint_{\Om\times\bR^3}\left(F_1\ln\left(\frac{F_1}{F_2}\right)-F_1+F_2\right)(x,v)dxdv
\\
&=\iint_{\Om\times\bR^3}h\left(\frac{F_1}{F_2}-1\right)F_2(x,v)dxdv\,,
\ea
$$
where $h$ is the function defined as follows:
$$
h:\,[-1,\infty)\ni z\mapsto h(z):=(1+z)\ln(1+z)-z\in\bR_+\,.
$$
(Notice that, since the integrand $h\left(\frac{F_1}{F_2}-1\right)F_2$ is a nonnegative measurable function, the relative entropy $H(F_1|F_2)$ is always
a well-defined element of $[0,+\infty]$.)

The two other quantities in (\ref{RelHThm}) are the entropy production rate per unit volume $\cP_\eps$ and the Darrozes-Guiraud information at the
boundary $\cD\cG_\eps$, whose definition is recalled below.

The entropy production rate per unit volume for the Boltzmann equation is
$$
\cP_\eps:=-\int_{\bR^3}\cB(F_\eps,F_\eps)\ln F_\eps dv
$$
and can be put in the form
$$
\cP_\eps=\iiint_{\bR^3\times\bR^3\times\bS^2}r\left(\frac{F'_\eps F'_{\eps*}}{F_\eps F_{\eps*}}-1\right)F_\eps F_{\eps*}|(v-v_*)\cdot\om|dvdv_*d\om\,,
$$
where $r$ is the following function:
$$
r:\,(-1,\infty)\ni z\mapsto r(z):=z\ln(1+z)\in\bR_+\,.
$$

As for the Darrozes-Guiraud information, it is defined as 
$$
\cD\cG_\eps(t,x):=\int_{\bR^3}h\left(\frac{F_\eps}{M}-1\right)(t,x,v)v\cdot n_xMdv\,,\quad x\in\d\Om\,,\,\,t>0\,.
$$
Observe that, for each $x\in\d\Om$ and each $v\in\bR^3$ such that $v\cdot n_x>0$, 
$$
\mu^\eps_{x,v}(dv'):=\frac{(v'\cdot n_x)_+M(v')\cK_\eps(x,v,dv')}{(v\cdot n_x)_+M(v)}
$$
is a probability measure on $\bR^3$ (by property (iii) of $\cK_\eps$), and that
$$
\frac{F_\eps}{M}(t,x,\cR_xv)=\La\mu^\eps_{x,v},\frac{F_\eps}{M}\Ra(t,x,v)\,,\quad\hbox{ for each }(x,v)\in\Ga_+\hbox{ and }t\ge 0\,.
$$
Since $h$ is convex, it follows from Jensen's inequality that
$$
\La\mu^\eps_{x,v},h\left(\frac{F_\eps}{M}-1\right)\Ra\ge h\left(\La\mu^\eps_{x,v},\frac{F_\eps}{M}-1\Ra\right)\,.
$$
On the other hand, by property (i) of the reflection kernel $\cK_\eps$, one has
$$
\int\La\mu^\eps_{x,v},h\left(\frac{F_\eps}{M}-1\right)\Ra M(v\cdot n_x)_+dv
=
\int h\left(\frac{F_\eps}{M}-1\right)M(v\cdot n_x)_+dv\,.
$$
Hence the Darrozes-Guiraud information satisfies
$$
\cD\cG_\eps(t,x)=\int_{\bR^3}\left(h\left(\frac{F_\eps}{M}-1\right)-h\left(\La\mu^\eps_{x,v},\frac{F_\eps}{M}-1\Ra\right)\right)M(v\cdot n_x)_+dv\ge 0\,.
$$
(See for instance Theorem 8.5.1 on p. 240 in \cite{CIP}.)

As a consequence of the local conservation of mass, i.e the first equality in (\ref{ConsLawDiff}) one finds that, for each scalar test function
$\phi\in C^1_c(\bR_+\times\overline\Om)$ and each $t>0$,
\be\lb{LocConsMassDistrib}
\ba
\eps\iint_{\Om\times\bR^3}F_\eps(t,x,v)\phi(t,x)dxdv-\eps\iint_{\Om\times\bR^3}F_\eps(0,x,v)\phi(0,x)dxdv&
\\
+\int_0^t\iint_{\d\Om\times\bR^3}F_\eps(s,x,v)\phi(s,x)v\cdot n_xdS(x)dvds&
\\
=\int_0^t\iint_{\Om\times\bR^3}F_\eps(s,x,v) (\eps\d_t+v\cdot\grad_x)\phi(s,x) dsdxdv&\,.
\ea
\ee

Let now $w\in C^1_c([0,T]\times\overline{\Om};\bR^3)$ satisfy 
\be\lb{Cond-w}
\Div_xw=0\,,\quad\hbox{ and }w\cdot n\rstr_{\d\Om}\,.
\ee
As a consequence of the local conservation of momentum, i.e. the second identity in (\ref{ConsLawDiff}), one finds that, for each $t>0$,
\be\lb{LocConsMomDistrib}
\ba
\eps\iint_{\Om\times\bR^3}F_\eps(t,x,v)v\cdot w(t,x)dxdv-\eps\iint_{\Om\times\bR^3}F_\eps(0,x,v)v\cdot w(0,x)dxdv&
\\
+\int_0^t\iint_{\d\Om\times\bR^3}F_\eps(s,x,v)(v_\tau\cdot w_\tau)v\cdot n_xdS(x)dvds&
\\
=\int_0^t\iint_{\Om\times\bR^3}F_\eps(s,x,v) (\eps\d_t+v\cdot\grad_x)(v\cdot w)(s,x) dsdxdv&\,.
\ea
\ee

Finally, observe that
\be\lb{RelEnt-w}
\ba
H(F_\eps|\cM_{(1,\eps w,1)})&=H(F_\eps|M)+\iint_{\Om\times\bR^3}F_\eps\ln\left(\frac{M}{\cM_{(1,\eps w,1)}}\right)dxdv
\\
&=H(F_\eps|M)+\iint_{\Om\times\bR^3}F_\eps\left(\tfrac12\eps^2|w|^2-\eps w\cdot v\right)dxdv\,.
\ea
\ee

Putting together (\ref{RelEnt-w}), (\ref{RelHThm}),  (\ref{LocConsMassDistrib}) and (\ref{LocConsMomDistrib}), we find that, for each test velocity 
field  $w\in C^1_c([0,T]\times\overline{\Om};\bR^3)$ satisfying (\ref{Cond-w}) and each $t>0$
\be\lb{RelEntr<}
\ba
\frac1{\eps^2}H&(F_\eps|\cM_{(1,\eps w,1)})(t)\le\frac1{\eps^2}H(F^{in}_\eps|\cM_{(1,\eps w(0,\cdot),1)})
\\
&-\frac1{\eps^{4+q}}\int_0^t\int_\Om\cP_\eps(s,x)dxds-\frac{1}{\eps^3}\int_0^t\int_{\d\Om}\cD\cG_\eps(s,x)dS(x)ds
\\
&-\frac1{\eps^2}\int_0^t\iint_{\Om\times\bR^N}(v-\eps w(s,x))^{\otimes 2}:\grad_xw(s,x)F_\eps(s,x,v)dxdvds
\\
&-\frac1\eps\int_0^t\iint_{\Om\times\bR^N}(v-\eps w(s,x))\cdot E(w)(s,x)F_\eps(s,x,v)dvdxds
\\
&+\frac{1}{\eps^2}\int_0^t\iint_{\d\Om\times\bR^N}F_\eps(s,x,v)(v_\tau\cdot w_\tau)(s,x)(v\cdot n_x)dS(x)dvds\,,
\ea
\ee
where $E(w)$ has been defined in (\ref{DefE}).

This inequality is the analogue for the scaled Boltzmann equation of the inequality (\ref{DiffEner}) used in the proof of Theorem \ref{T-NS>Eul}.
Indeed, for each pair $u_1, u_2$ of (measurable) vector fields in $\Om$ such that
$$
\int_{\Om}|u_1(x)|^2dx+\int_{\Om}|u_2(x)|^2dx<+\infty\,,
$$
one has
$$
\frac1{\eps^2}H(\cM_{(1,\eps u_2,1)}|\cM_{(1,\eps u_1,1)})\to\tfrac12\int_{\Om}|u_1-u_2|^2(x)dx\,.
$$
Therefore, the scaled relative entropy
$$
\frac1{\eps^2}H(F_\eps|\cM_{(1,\eps w,1)})\hbox{ is the analogue of }\tfrac12\int_\Om|u_\eps-u|^2(t,x)dx\,,
$$
the entropy production
$$
\frac1{\eps^{4+q}}\int_0^t\int_\Om\cP_\eps(s,x)dxds
$$
is the analogue of the viscous dissipation
$$
\eps\int_0^t\int_\Om\tfrac12|\Si(u_\eps)(s,x)|^2dxds\,,
$$
--- with the notation and scaling assumption used in sections \ref{S-NS>E}-\ref{S-NS2>E} --- while the Darrozes-Guiraud information 
$$
\frac{1}{\eps^3}\int_0^t\int_{\d\Om}\cD\cG_\eps(s,x)dS(x)ds
$$
is the analogue of the boundary friction
$$
\l_\eps\int_0^t\int_{\d\Om}|u_\eps(s,x)|^2dS(x)ds
$$
appearing in section \ref{S-NS2>E}. Likewise the term
$$
\frac1{\eps^2}\int_0^t\iint_{\Om\times\bR^N}(v-\eps w(s,x))^{\otimes 2}:\grad_xw(s,x)F_\eps(s,x,v)dxdvds
$$
is the analogue of 
$$
\int_0^t\int_\Om\grad_xw:(u_\eps-w)^{\otimes 2}(s,x)dxds\,,
$$
and we seek to control it in terms of the scaled relative entropy
$$
\frac1{\eps^2}H(F_\eps|\cM_{(1,\eps w,1)})
$$
in order to conclude with Gronwall's inequality. This is precisely what is done in \cite{SRBoltzEul}, and we shall not repeat this (difficult) analysis
here.

What remains to be done is to control the boundary term
$$
\frac{1}{\eps^2}\int_0^t\iint_{\d\Om\times\bR^N}F_\eps(s,x,v)(v_\tau\cdot w_\tau)(s,x)(v\cdot n_x)dS(x)dvds
$$
of indefinite sign by the Darrozes-Guiraud inequality, possibly up to some asymptotically negligible quantity. This step can be viewed as the
analogue for the Boltzmann equation of the Cauchy-Schwarz inequality used to control the boundary term
$$
\l_\eps\int_0^t\int_{\d\Om}u_\eps\cdot u(s,x)dS(x)ds
$$
by the boundary friction
$$
\l_\eps\int_0^t\int_{\d\Om}|u_\eps|^2(s,x)dS(x)ds\,.
$$

\subsection{Proof of Theorem \ref{T-Boltz>Eul}}

The key argument in the proof of Theorem \ref{T-Boltz>Eul} is given by the following proposition.

\begin{Prop}\lb{P-BoundCtrol}
Under the same assumptions as in Theorem \ref{T-Boltz>Eul}, for each vector field $w\in C^1_c(\bR_+\times\overline\Om;\bR^3)$ such that 
$\Div_xw=0$ and $w\cdot n\rstr_{\d\Om}=0$, satisfying $\Supp(w)\subset\bR_+\times K$, where $K$ is a compact subset of $\bR^3$, one has
$$
\ba
{}&\left|\int_0^t\iint_{\d\Om\times\bR^3}F_\eps(s,x,v)(v_\tau\cdot w_\tau)(s,x)(v\cdot n_x)dS(x)dvds\right|
\\
&\qquad\qquad\qquad\le
\left(\frac1{N\eps}+\frac{\sqrt{2\pi}C(w,N)}{h(1/2)}\eps\right)\int_0^t\int_{\d\Om\cap K}\frac1{\b_\eps(x)}\cD\cG_\eps(s,x)dxds
\\
&\qquad\qquad\qquad+
2C(w,N)\eps^2\int_0^t\int_{\d\Om\cap K}\frac{\a_\eps(x)}{\eps}\int_{\bR^3}F_\eps(s,x,v)(v\cdot n_x)^2dvdxds
\ea
$$
for each $\eps\in (0,1)$, each $N\ge 1$ and each $t\ge 0$, where
\be\lb{DefCwN}
C(w,N):=\tfrac1N\int_{\bR^3}(e^{N\|w\|_{L^\infty}|v|}-N\|w\|_{L^\infty}|v|-1)(v\cdot n_x)_+Mdv\,.
\ee
\end{Prop}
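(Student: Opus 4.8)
The plan is to bound the boundary integral
$\int_0^t\iint_{\d\Om\times\bR^3}F_\eps(v_\tau\cdot w_\tau)(v\cdot n_x)\,dS\,dv\,ds$
by the Darrozes--Guiraud information $\cD\cG_\eps$ plus a controllably small remainder, mimicking the way the Cauchy--Schwarz inequality controls the boundary friction term in the proof of Theorem \ref{T-NS>Eul}. The essential difficulty, and the reason the hypotheses (\ref{CondK})--(\ref{Cond2K}) are imposed, is that $v_\tau\cdot w_\tau$ has indefinite sign, whereas $\cD\cG_\eps$ is a nonnegative quantity measuring the defect in Jensen's inequality for the convex function $h$. So the first task is to rewrite the boundary flux in terms of the incoming/outgoing splitting at the wall and extract the piece that the reflection kernel already controls.

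\emph{Step 1: reduce to the kernel fluctuation.} First I would use the normalization properties (i) and (iii) of $\cK_\eps$ to replace the bare velocity moment $\int v_\tau \cdots$ by the difference appearing in (\ref{CondK}). Concretely, since $w_\tau$ is a fixed smooth tangential field and the Maxwellian flux through the wall is balanced by property (iii), the quantity $\int_{v\cdot n_x>0} v_\tau\,\cK_\eps(x,v,dv')\,dv$ differs from $v'_\tau\,dv'$ by something of size $\a_\eps(x)|v'|\,dv'$. Pairing the boundary flux against the equilibrium and against the fluctuation $\tfrac{F_\eps}{M}-1$ splits the integral into (a) a "Gaussian-balanced" part that is annihilated by the symmetry built into properties (ii)--(iii), and (b) a fluctuation part that I can estimate in terms of $h(\tfrac{F_\eps}{M}-1)$.

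\emph{Step 2: the elementary pointwise inequality for $h$.} The heart of the estimate is a Young-type bound relating the linear weight $v_\tau\cdot w_\tau$ to the strictly convex $h$. For $\xi=\tfrac{F_\eps}{M}-1$ one uses that for any $a>0$,
\[
|\xi|\,|b|\le \frac{1}{Na}\,h(\xi)\;+\;a\,\bigl(e^{N|b|}-N|b|-1\bigr),
\]
which is the Legendre-type duality underlying the definition of $C(w,N)$ in (\ref{DefCwN}); choosing $b$ proportional to $w_\tau$ and integrating $Mdv$ over the hemisphere produces exactly the constant $C(w,N)$. This is where the $\tfrac1{N\eps}$ and $\tfrac{\sqrt{2\pi}C(w,N)}{h(1/2)}\eps$ weights emerge: the factor $1/N\eps$ collects the $h$-term against $\cD\cG_\eps$, and the $\eps$-factor collects the conjugate exponential term, with $1/h(1/2)$ arising from a threshold split between the region where $|\xi|\le 1/2$ (handled directly) and $|\xi|> 1/2$ (where $h(\xi)\ge h(1/2)$ lets one absorb a factor).

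\emph{Step 3: insert the Darrozes--Guiraud lower bound and the kernel minorization.} To convert $\int h(\xi)\,M(v\cdot n_x)_+\,dv$ on the incoming side into $\cD\cG_\eps$ I would invoke assumption (\ref{Cond2K}): since $\cK_\eps\ge\a_\eps\b_\eps\sqrt{2\pi}M(v\cdot n_x)_+\,dv'$, the probability measure $\mu^\eps_{x,v}$ controls from below the averaged fluctuation, so that $\tfrac1{\b_\eps(x)}\cD\cG_\eps$ dominates the relevant $h$-moment; this is what replaces the single factor $\tfrac1{\b_\eps}$ in front of $\cD\cG_\eps$ in the statement. The residual term coming from the $\a_\eps$-sized defect in (\ref{CondK}) is paired with the second moment $\int F_\eps(v\cdot n_x)^2\,dv$, and carries the prefactor $\tfrac{\a_\eps}{\eps}$, which by (\ref{Cond-a}) tends to $0$; keeping the explicit $2C(w,N)\eps^2$ weight is just bookkeeping from Step 2.

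\emph{Main obstacle.} The delicate point is Step 2--Step 3 together: one must choose the free parameters (the Young parameter $a$, the scaling $N$, and the $|\xi|\lessgtr 1/2$ threshold) so that \emph{simultaneously} (i) the convex $h$-term is absorbed into $\cD\cG_\eps$ with the stated $\b_\eps^{-1}$ weight, and (ii) the conjugate exponential moment stays finite and is exactly $C(w,N)$ of (\ref{DefCwN}). Finiteness of $C(w,N)$ forces $N\|w\|_{L^\infty}$ to be moderate, and the interplay with the Maxwellian tail is what makes $C(w,N)$ depend on $N$; balancing these against the two explicit $\eps$-powers in the statement (so that, after later dividing by $\eps^2$ in (\ref{RelEntr<}), the $\cD\cG_\eps$ coefficient can be matched against the available $\tfrac1{\eps^3}$ prefactor) is the real content. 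The symmetry properties (ii)--(iii) of the kernel must be used carefully to guarantee that the "balanced" part in Step 1 genuinely cancels rather than contributing an uncontrolled term.
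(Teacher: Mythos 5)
Your proposal follows the same overall route as the paper (fold the boundary flux onto the outgoing hemisphere using the boundary condition, extract a factor $\a_\eps$ from (\ref{CondK}), apply Young's inequality for the pair $h,h^*$, and convert the resulting $h$-moment into $\cD\cG_\eps$ via (\ref{Cond2K})), but Steps 2--3 as written contain a genuine gap. You apply Young's inequality to $h(\xi)$ with $\xi=F_\eps/M-1$ and then assert that $\b_\eps^{-1}\cD\cG_\eps$ ``dominates the relevant $h$-moment.'' It does not: $\cD\cG_\eps$ is a Jensen defect, i.e.\ it controls only the difference $\int\big(h(\eps g_\eps)-h(\la\mu^\eps_{x,v},\eps g_\eps\ra)\big)(v\cdot n_x)_+M\,dv$, and it can vanish (e.g.\ when the boundary trace of $F_\eps$ is a non-centered Maxwellian) while $\int h(\xi)(v\cdot n_x)_+M\,dv$ does not. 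The paper's fix is to center the convex function: one works with $l(z-z_0):=h(z)-h(z_0)-h'(z_0)(z-z_0)$ at $z_0=\eps\L_x g_\eps$, where $\L_x$ is the normalized outgoing flux average; the linear term integrates to zero against $(v\cdot n_x)_+M\,dv$ precisely because $\L_x$ is that flux average, so the $l$-moment equals $\int\big(h(\eps g_\eps)-h(\eps\L_xg_\eps)\big)(v\cdot n_x)_+M\,dv$, and this is the quantity that (\ref{Cond2K}) lets you bound by $(\a_\eps\b_\eps)^{-1}\cD\cG_\eps$ --- note the factor is $\a_\eps\b_\eps$, not $\b_\eps$, which is exactly what cancels the $\a_\eps$ produced in your Step 1.

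A second consequence of the missing centering is that the Legendre dual of $l$ is $(1+z_0)h^*$, not $h^*$: the conjugate term therefore comes out multiplied by the outgoing mass flux $\L_x(F_\eps)$ rather than reducing to the fixed constant $C(w,N)$ as you claim. This is why the paper needs a third lemma (Lemma \ref{L-OutFlux}), controlling $\a_\eps$ times the outgoing mass flux by $\tfrac1{h(1/2)\b_\eps}\cD\cG_\eps$ plus $\tfrac{2\a_\eps}{\sqrt{2\pi}}\int F_\eps(v\cdot n_x)^2dv$; that lemma (with the choice $\eta=1/2$) is the true origin of both the factor $h(1/2)^{-1}$ and the second-moment term in the final estimate, which you instead attribute to a threshold split on $|\xi|$ and to the defect in (\ref{CondK}), respectively. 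Two smaller inaccuracies: the cancellation of the leading boundary term is not produced by the reciprocity property (ii) --- it comes from (\ref{CondK}) itself (near-conservation of tangential momentum by the kernel), together with $\cR_xv_\tau=v_\tau$ and the fact that the constant $\L_xg_\eps$ is preserved by the probability measures $\mu^\eps_{x,v}$; and your Young inequality with two independent free parameters $a$ and $N$ is not the standard $h$--$h^*$ duality, since $h$ is not homogeneous and only one scaling parameter is available.
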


Set
\be\lb{DefGg}
F_\eps=MG_\eps\,,\quad G_\eps=1+\eps g_\eps\,.
\ee
Using the boundary condition in (\ref{ScalBoltz}) and the substitution $v\mapsto\cR_xv$ for $v\cdot n_x<0$ shows that
$$
\ba
\int_0^t\iint_{\d\Om\times\bR^3}F_\eps(s,x,v)(v_\tau\cdot w_\tau)(s,x)(v\cdot n_x)dS(x)dvds
\\
=
\int_0^t\iint_{\d\Om\times\bR^3}(F_\eps(s,x,v)-F_\eps(s,x,\cR_xv)(v_\tau\cdot w_\tau)(s,x)(v\cdot n_x)_+dS(x)dvds
\ea
$$
--- notice that $\cR_xv_\tau=v_\tau$. Thus
$$
\ba
\int_0^t\iint_{\d\Om\times\bR^3}F_\eps(s,x,v)(v_\tau\cdot w_\tau)(s,x)(v\cdot n_x)dS(x)dvds
\\
=
\eps\int_0^t\iint_{\d\Om\times\bR^3}(g_\eps(s,x,v)-g_\eps(s,x,\cR_xv)(v_\tau\cdot w_\tau)(s,x)(v\cdot n_x)_+M(v)dS(x)dvds
\\
=
\eps\int_0^t\iint_{\d\Om\times\bR^3}(g_\eps-\la\mu^\eps_{x,v},g_\eps\ra)(s,x,v)(v_\tau\cdot w_\tau)(s,x)(v\cdot n_x)_+M(v)dS(x)dvds\,.
\ea
$$

Since $\mu^\eps_{x,v}$ is a probability measure acting on the velocity variable only, 
$$
g_\eps-\la\mu^\eps_{x,v},g_\eps\ra=g_\eps-\L_xg_\eps-\la\mu^\eps_{x,v},g_\eps-\L_xg_\eps\ra\,,
$$
where
\be\lb{DefLambda}
\L_x\phi:=\sqrt{2\pi}\int_{\bR^3}\phi(v)(v\cdot n_x)_+M(v)dv\,.
\ee

Therefore
$$
\ba
\left|\int_0^t\iint_{\Ga}F_\eps(s,x,v)(v_\tau\cdot w_\tau(s,x,v))(v\cdot n_x)dS(x)dvds\right|
\\
=\eps\left|\int_0^t\iint_{\Ga_+}(g_\eps-\L_xg_\eps-\la\mu^\eps_{x,v},g_\eps-\L_xg_\eps\ra)(v_\tau\cdot w_\tau)(v\cdot n_x)_+M(v)dv\right|
\\
\le\eps\int_0^t\iint_{\Ga_+}\a_\eps(x)|g_\eps-\L_xg_\eps|(s,x,v)|v_\tau||w_\tau|(s,x)(v\cdot n_x)_+M(v)dv 
\ea
$$
by assumption (\ref{CondK}).

At this point, the proof of Proposition \ref{P-BoundCtrol} is done in two steps.  

The first step is summarized in the following lemma.

\begin{Lem}\lb{L-Young<}
Under the same assumptions as in Proposition \ref{P-BoundCtrol} and with the notation (\ref{DefGg}), for each $(t,x)\in(\bR_+\times\d\Om)\cap\Supp(w)$, 
each $N\ge 1$ and each $\eps\in(0,1)$, 
$$
\ba
N\eps^2\int_{\bR^3}|g_\eps-\L_xg_\eps||v_\tau||w_\tau|(v\cdot n_x)_+M(v)dv&
\\
\le
\int_{\bR^3}(h(\eps g_\eps)-h(\eps\L_x(g_\eps)))(v\cdot n_x)_+M(v)dv+NC(w,N)\eps^2\L_x(F_\eps)&\,,
\ea
$$
where $\L_x$ has been defined in (\ref{DefLambda}) and $C(w,N)$ in (\ref{DefCwN}).
\end{Lem}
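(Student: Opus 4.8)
The plan is to reduce the lemma to a single pointwise Young inequality for $h$ and its Legendre conjugate, which I first compute. Since $h(z)=(1+z)\ln(1+z)-z$ has $h'(z)=\ln(1+z)$, solving $p=\ln(1+z)$ gives the conjugate $h^*(p):=\sup_{z\ge-1}(pz-h(z))=e^p-p-1$; the exponential in the definition (\ref{DefCwN}) of $C(w,N)$ is exactly this $h^*$. I abbreviate $\psi:=\eps g_\eps=G_\eps-1$ and $\bar\psi:=\eps\L_x g_\eps=\L_x\psi$, the latter constant in $v$. Because $\L_x 1=\sqrt{2\pi}\int_{\bR^3}(v\cdot n_x)_+M\,dv=1$, the operator $\L_x$ is the average against the probability measure $\sqrt{2\pi}(v\cdot n_x)_+M\,dv$, and $1+\bar\psi=\L_x G_\eps=\sqrt{2\pi}\int_{\bR^3}F_\eps(v\cdot n_x)_+\,dv\ge0$; this last quantity is what the symbol $\L_x(F_\eps)$ in the statement should be read as.

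First I would rewrite the right-hand side. As $h'(\bar\psi)$ is constant in $v$ and $\int_{\bR^3}(\psi-\bar\psi)(v\cdot n_x)_+M\,dv=0$ by definition of $\bar\psi$, the linear term may be inserted for free, exhibiting the Bregman divergence of $h$:
$$
\int_{\bR^3}\big(h(\psi)-h(\bar\psi)\big)(v\cdot n_x)_+M\,dv=\int_{\bR^3}\big(h(\psi)-h(\bar\psi)-h'(\bar\psi)(\psi-\bar\psi)\big)(v\cdot n_x)_+M\,dv\,.
$$
A direct computation gives the identity $h(\psi)-h(\bar\psi)-h'(\bar\psi)(\psi-\bar\psi)=(1+\bar\psi)h(u)$ with $u:=(\psi-\bar\psi)/(1+\bar\psi)$, for which $1+u=G_\eps/\L_x G_\eps\ge0$, so $u\ge-1$ lies in the domain of $h$. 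Since also $g_\eps-\L_x g_\eps=(1+\bar\psi)u/\eps$, dividing the whole claim by the constant $1+\bar\psi>0$ (the degenerate case $1+\bar\psi=0$ forcing both sides to vanish) reduces it to
$$
N\eps\int_{\bR^3}|u|\,|v_\tau|\,|w_\tau|(v\cdot n_x)_+M\,dv\le\int_{\bR^3}h(u)(v\cdot n_x)_+M\,dv+NC(w,N)\eps^2\,.
$$

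Next I would prove this reduced inequality pointwise and integrate. Young's inequality $ab\le h(a)+h^*(b)$ with $a=u$ and $b=N\eps|v_\tau||w_\tau|\ge0$ handles $u\ge0$; for $u<0$ one applies it with dual variable $-b$ and uses $h^*(-b)=e^{-b}+b-1\le e^{b}-b-1=h^*(b)$ (their difference being $2(\sinh b-b)\ge0$), so that in every case $|u|\,b\le h(u)+h^*(N\eps|v_\tau||w_\tau|)$. It then remains to extract the factor $\eps^2$ from $h^*$: expanding $h^*(s)=\sum_{k\ge2}s^k/k!$ and using $\eps^k\le\eps^2$ for $k\ge2$ and $0<\eps<1$ gives $h^*(N\eps|v_\tau||w_\tau|)\le\eps^2 h^*(N|v_\tau||w_\tau|)$, whereupon $|v_\tau|\le|v|$, $|w_\tau|\le\|w\|_{L^\infty}$ and the monotonicity of $s\mapsto e^s-s-1$ on $[0,\infty)$ yield $h^*(N\eps|v_\tau||w_\tau|)\le\eps^2(e^{N\|w\|_{L^\infty}|v|}-N\|w\|_{L^\infty}|v|-1)$. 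Integrating the resulting pointwise bound against $(v\cdot n_x)_+M\,dv$ and recognizing (\ref{DefCwN}) produces exactly the correction $NC(w,N)\eps^2$; the Gaussian decay of $M$ makes $C(w,N)$ finite.

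I expect the principal difficulty to be recognizing and assembling the right structure rather than any hard estimate: one must identify the precise conjugate $h^*(p)=e^p-p-1$ so that the exponential weight of $C(w,N)$ appears organically, and one must extract exactly the power $\eps^2$ --- a crude bound of $h^*(N\eps|v_\tau||w_\tau|)$ by an $O(1)$ quantity would not give an asymptotically negligible correction, whereas the term-by-term comparison $\eps^k\le\eps^2$ does. The algebraic identity recasting the Bregman divergence as $(1+\bar\psi)h(u)$, though elementary, is the step that rescales the two-variable problem into the clean one-variable Young estimate.
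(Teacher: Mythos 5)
Your proof is correct and follows essentially the same route as the paper: both rest on Young's inequality for the Bregman divergence $l(z-z_0)=h(z)-h(z_0)-h'(z_0)(z-z_0)$ with conjugate $(1+z_0)h^*$, the vanishing of the linear term under $\L_x$, and the term-by-term extraction of $\eps^2$ from the power series of $h^*$; your rescaling by $1+\bar\psi$ merely restates the identity $l^*=(1+z_0)h^*$ that the paper applies directly. If anything, you are slightly more careful than the paper on the sign issue, replacing its equality $l^*(\pm b)=(1+z_0)(e^{b}-b-1)$ by the correct inequality $h^*(-b)\le h^*(b)$.
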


\begin{proof}
The proof follows that of Lemma 4.6 in \cite{BGP}. Pick $z_0>-1$, and define, for each $z>-1$
$$
l(z-z_0):=h(z)-h(z_0)-h'(z_0)(z-z_0)\,.
$$
We recall that the Legendre dual of $h$ and $l$ are
$$
h^*(p)=e^p-p-1\quad\hbox{ and }l^*(p)=(1+z_0)h^*(p)=(1+z_0)(e^p-p-1)\,.
$$

Applying Young's inequality (see \cite{BGL2}, especially section 3 there) to $l$ with $z=\eps g_\eps$ and $z_0=\eps\L_x(g_\eps)$
$$
\ba
N\eps^2|g_\eps-\L_x(g_\eps)||v_\tau||w_\tau|
\le
l(\eps(g_\eps-\L_x(g_\eps)))+l^*(N\eps\Sign(g_\eps-\L_x(g_\eps))|v_\tau||w_\tau|)&
\\
=
l(\eps(g_\eps-\L_x(g_\eps)))+(1+\eps\L_x(g_\eps))(e^{N\eps|w||v|}-N\eps|w||v|-1)&
\\
\le l(\eps(g_\eps-\L_x(g_\eps)))+\eps^2(1+\eps\L_x(g_\eps))(e^{N|w||v|}-N|w||v|-1)&\,,
\ea
$$
whenever $0<\eps<1$. (Indeed, for each $z>0$ and $\eps\in(0,1)$, one has
$$
\ba
e^{\eps z}-\eps z-1&=\sum_{n\ge 2}\frac{(\eps z)^n}{n!}
\\
&\le\eps^2\sum_{n\ge 2}\frac{z^n}{n!}=\eps^2(e^z-z-1)\,;
\ea
$$
see also (3.14) in \cite{BGL2}.)

Therefore
\be\lb{Interm<}
\ba
N\eps^2\int_{\bR^3}|g_\eps-\L_x(g_\eps)|(s,x,v)|v_\tau||w_\tau|(s,x)(v\cdot n_x)_+M(v)dv&
\\
\le
\int_{\bR^3}l(\eps(g_\eps-\L_x(g_\eps)))(v\cdot n_x)_+M(v)dv+NC(w,N)\eps^2\L_x(F_\eps)&\,.
\ea
\ee

Moreover, since $\L_x$ is constant in $v$ and the average under a probability measure,
$$
\ba
\int_{\bR^3}h'(\eps\L_x(g_\eps))&(g_\eps-\L_x(g_\eps))(v\cdot n_x)_+Mdv
\\
&=h'(\eps\L_x(g_\eps))\int_{\bR^3}(g_\eps-\L_x(g_\eps))(v\cdot n_x)_+Mdv
\\
&=h'(\eps\L_x(g_\eps))\tfrac1{\sqrt{2\pi}}\L_x(g_\eps-\L_x(g_\eps))=0\,,
\ea
$$
so that
$$
\ba
\int_{\bR^3}&l(\eps(g_\eps-\L_x(g_\eps)))(v\cdot n_x)_+M(v)dv
\\
&=
\int_{\bR^3}(h(\eps g_\eps)-h(\eps\L_x(g_\eps)))(v\cdot n_x)_+M(v)dv\,.
\ea
$$
Substituting this last integral in the right hand side of (\ref{Interm<}) leads to the inequality in the statement of Lemma \ref{L-Young<}.
\end{proof}

It remains to relate the integral on the right hand side of the inequality in Lemma \ref{L-Young<} to the Darrozes-Guiraud information.

\begin{Lem}\lb{L-hL<DG}
Under the same assumptions as in Proposition \ref{P-BoundCtrol} and with the notation (\ref{DefGg}), for each $(t,x)\in\bR_+\times\d\Om$ and each 
$\eps\in(0,1)$, one has
$$
\a_\eps(x)\b_\eps(x)\int_{\bR^3}(h(\eps g_\eps)-h(\eps\L_x(g_\eps)))(t,x,v)(v\cdot n_x)_+M(v)dv\le\cD\cG_\eps(t,x)\,.
$$
\end{Lem}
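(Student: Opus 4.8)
The plan is to prove the inequality pointwise in $(t,x)$, so fix $x\in\d\Om$ and $t>0$ and read all velocity integrals over the outgoing hemisphere $\{v\cdot n_x>0\}$. Introduce the probability measure $d\nu_x:=\sqrt{2\pi}(v\cdot n_x)_+M(v)dv$ (it has total mass $1$ because $\sqrt{2\pi}\int_{\bR^3}(v\cdot n_x)_+M(v)dv=1$), so that $\L_x\phi=\la\nu_x,\phi\ra$. With the notation $\eps g_\eps=\frac{F_\eps}M-1$, the Darrozes-Guiraud information was already put above in the Jensen-gap form
$$
\cD\cG_\eps(t,x)=\int_{\bR^3}\Big(h(\eps g_\eps)-h(\eps\la\mu^\eps_{x,v},g_\eps\ra)\Big)(v\cdot n_x)_+M(v)dv\,.
$$
Writing $\bar g:=\L_x(g_\eps)=\la\nu_x,g_\eps\ra$, rewriting both sides of the claimed inequality against $\nu_x$ and rearranging, the Lemma is equivalent to
$$
\int h(\eps\la\mu^\eps_{x,v},g_\eps\ra)\,d\nu_x\le\a_\eps(x)\b_\eps(x)\,h(\eps\bar g)+(1-\a_\eps(x)\b_\eps(x))\int h(\eps g_\eps)\,d\nu_x\,.
$$

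The key structural input is that hypothesis (\ref{Cond2K}), inserted into the formula defining $\mu^\eps_{x,v}$, yields the minorization $\mu^\eps_{x,v}\ge\a_\eps(x)\b_\eps(x)\,\nu_x$ for a.e. $v$ with $v\cdot n_x>0$. Since $\a_\eps(x)\b_\eps(x)\le 1$ by (\ref{Cond-a}) and (\ref{Cond-b}), I may assume $\a_\eps(x)\b_\eps(x)<1$ (equality forces $\mu^\eps_{x,v}=\nu_x$, and the claim is then trivial) and split
$$
\mu^\eps_{x,v}=\a_\eps(x)\b_\eps(x)\,\nu_x+(1-\a_\eps(x)\b_\eps(x))\,\tilde\mu^\eps_{x,v}\,,
$$
where the residual $\tilde\mu^\eps_{x,v}$ is again a probability measure carried by $\{v'\cdot n_x>0\}$. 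The second input is the stationarity of $\nu_x$ under the transition kernel $\mu^\eps_{x,v}$, i.e. $\int\la\mu^\eps_{x,v},\phi\ra\,d\nu_x=\int\phi\,d\nu_x$ for every integrable $\phi$ — this is exactly the computation ``by property (i)'' already carried out above in the nonnegativity proof for $\cD\cG_\eps$, which applies verbatim to any $\phi$. Feeding the splitting into this identity and cancelling the factor $(1-\a_\eps\b_\eps)$ shows that $\nu_x$ is stationary for the residual kernel as well: $\int\la\tilde\mu^\eps_{x,v},\phi\ra\,d\nu_x=\int\phi\,d\nu_x$.

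With these two facts the estimate follows from two uses of convexity. First, since $\eps\la\mu^\eps_{x,v},g_\eps\ra=\a_\eps\b_\eps\,(\eps\bar g)+(1-\a_\eps\b_\eps)\,(\eps\la\tilde\mu^\eps_{x,v},g_\eps\ra)$ is a convex combination of two points of $[-1,\infty)$, convexity of $h$ gives, pointwise in $v$,
$$
h(\eps\la\mu^\eps_{x,v},g_\eps\ra)\le\a_\eps\b_\eps\,h(\eps\bar g)+(1-\a_\eps\b_\eps)\,h(\eps\la\tilde\mu^\eps_{x,v},g_\eps\ra)\,.
$$
Integrating against $d\nu_x$ reduces the displayed target to $\int h(\eps\la\tilde\mu^\eps_{x,v},g_\eps\ra)\,d\nu_x\le\int h(\eps g_\eps)\,d\nu_x$. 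For this I apply Jensen's inequality to the probability measure $\tilde\mu^\eps_{x,v}$, namely $h(\eps\la\tilde\mu^\eps_{x,v},g_\eps\ra)\le\la\tilde\mu^\eps_{x,v},h(\eps g_\eps)\ra$, integrate against $d\nu_x$, and use the stationarity of $\nu_x$ under $\tilde\mu^\eps_{x,v}$ to replace $\int\la\tilde\mu^\eps_{x,v},h(\eps g_\eps)\ra\,d\nu_x$ by $\int h(\eps g_\eps)\,d\nu_x$. Chaining the three displays establishes the Lemma.

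The one genuinely new point compared with the Maxwell-accommodation case of \cite{BGP} is recognizing that (\ref{Cond2K}) is precisely a minorization $\mu^\eps_{x,v}\ge\a_\eps\b_\eps\,\nu_x$ of the reflection measure by a fixed multiple of the half-Maxwellian equilibrium $\nu_x$, and that this is exactly what lets one peel off from the full Darrozes-Guiraud Jensen gap the ``$\nu_x$-part'' one wishes to retain, leaving a residual gap killed by Jensen together with stationarity. The main thing to verify carefully is that the residual kernel $\tilde\mu^\eps_{x,v}$ inherits the invariance of $\nu_x$; everything else is convexity.
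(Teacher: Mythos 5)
Your proof is correct and follows essentially the same route as the paper: the minorization $\mu^\eps_{x,v}\ge\a_\eps\b_\eps\,\nu_x$ extracted from (\ref{Cond2K}) is exactly the paper's decomposition $\cK_\eps=\a_\eps\b_\eps\sqrt{2\pi}(v\cdot n_x)_+M\,dv+(1-\a_\eps\b_\eps)\cL_\eps$ (your residual $\tilde\mu^\eps_{x,v}$ is the paper's $\nu^\eps_{x,v}$), and the two convexity steps --- splitting $h$ at the convex combination, then Jensen plus the property-(i) invariance of the half-Maxwellian measure under the residual kernel --- coincide with the paper's argument.
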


\begin{proof}
By (\ref{Cond2K}), the reflection kernel $\cK$ can be put in the form
$$
\cK(x,v,dv')=\a_\eps(x)\b_\eps(x)\sqrt{2\pi}(v\cdot n_x)_+M(v)dv+(1-\a_\eps(x)\b_\eps(x))\cL_\eps(x,v,dv')\,.
$$
Since $0\le\a_\eps(x)\b_\eps(x)\le 1$, the kernel $\cL_\eps(x,v,dv')$ so defined satisfies the properties (i), (ii) and 

\smallskip
\noindent
(iii') for each $x\in\d\Om$, 
$$
M(v)v\!\cdot\! n_x=\int_{v'\!\cdot n_x>0}M(v')v'\!\cdot\! n_x\cL_\eps(x,v,dv')\,,\quad v\!\cdot\! n_x>0\,.
$$

Define $\nu_{x,v}$ by analogy with $\mu_{x,v}$:
$$
\nu^\eps_{x,v}(dv'):=\frac{(v'\cdot n_x)_+M(v')\cL_\eps(x,v,dv')}{(v\cdot n_x)_+M(v)}\,,
$$
so that the boundary condition satisfied by the solution $F_\eps$ of (\ref{ScalBoltz}) takes the form
$$
\ba
\frac{F_\eps}{M}(t,x,\cR_xv)=\a_\eps(x)\b_\eps(x)\L_x\left(\frac{F_\eps}{M}\right)+(1-\a_\eps(x)\b_\eps(x))\La\nu^\eps_{x,v},\frac{F_\eps}{M}\Ra(t,x,v)\,,
\\
\quad\hbox{ for each }(x,v)\in\Ga_+\hbox{ and }t\ge 0\,,
\ea
$$
or equivalently
$$
\ba
g_\eps(t,x,\cR_xv)=\a_\eps(x)\b_\eps(x)\L_x(g_\eps)(t,x,v)+(1-\a_\eps(x)\b_\eps(x))\la\nu^\eps_{x,v},g_\eps(t,x,\cdot)\ra\,,
\\
\quad\hbox{ for each }(x,v)\in\Ga_+\hbox{ and }t\ge 0\,.
\ea
$$

Thus
$$
\ba
\cD\cG_\eps(t,x)&=\int h(\eps g_\eps)(t,x,v)v\cdot n_xM(v)dv
\\
&=\int (h(\eps g_\eps)(t,x,v)-h(\eps g_\eps)(t,x,\cR_xv))(v\cdot n_x)_+M(v)dv
\ea
$$
which can be put in the form
$$
\cD\cG_\eps=\tfrac1{\sqrt{2\pi}}\L_x(h(\eps g_\eps)-h(\a_\eps\b_\eps(x)\L_x(g_\eps)+(1-\a_\eps\b_\eps(x))\la\nu^\eps_{x,v},g_\eps\ra))
$$

By convexity of $h$
$$
\ba
{}&h(\a_\eps\b_\eps(x)\L_x(g_\eps)+(1-\a_\eps\b_\eps(x))\la\nu^\eps_{x,v},g_\eps\ra)
\\
&\qquad\le\a_\eps\b_\eps(x)h(\L_x(g_\eps))+(1-\a_\eps\b_\eps(x))h(\la\nu^\eps_{x,v},g_\eps\ra)
\ea
$$
so that
$$
\ba
\cD\cG_\eps&\ge\tfrac1{\sqrt{2\pi}}\a_\eps\b_\eps(x)\L_x(h(\eps g_\eps)-h(\L_x(g_\eps)))
\\
&+\tfrac1{\sqrt{2\pi}}(1-\a_\eps\b_\eps(x))\L_x(h(\eps g_\eps)-h(\la\nu^\eps_{x,v},g_\eps\ra))
\ea
$$
Since the kernel $\cL_\eps$ satisfies property (i), one has
$$
\L_x(h(\eps g_\eps))=\L_x(\la\nu^\eps_{x,v},h(\eps g_\eps)\ra)
$$
so that
$$
\L_x(h(\eps g_\eps)-h(\la\nu^\eps_{x,v},g_\eps\ra))=\L_x(\la\nu^\eps_{x,v},h(\eps g_\eps)-h(\la\nu^\eps_{x,v},g_\eps\ra)\ra)\ge 0\,,
$$
where the last inequality follows from Jensen's inequality applied to the probability measure $\nu^\eps_{x,v}$ and the convex function $h$.

Since $0\le\a_\eps(x)\b_\eps(x)\le 1$ for all $x\in\d\Om$, we conclude that 
$$
\cD\cG_\eps\ge\tfrac1{\sqrt{2\pi}}\a_\eps\b_\eps(x)\L_x(h(\eps g_\eps)-h(\L_x(g_\eps)))
$$
which is precisely the sought inequality.
\end{proof}

Finally, we control the outgoing mass flux at the boundary exactly as explained in Lemma 4.7 of \cite{BGP}.

\begin{Lem}\lb{L-OutFlux}
Under the same assumptions and with the same notation as in Proposition \ref{P-BoundCtrol}, for each $(t,x)\in\bR_+\times\d\Om$ and each 
$\eps,\eta\in(0,1)$, one has
$$
\ba
\a_\eps(x)\int_{\bR^3}F_\eps(t,x,v)(v\cdot n_x)_+M(v)dv&\le\frac1{h(\eta)\b_\eps(x)}\cD\cG_\eps(t,x)
\\
&+\frac{\a_\eps(x)}{\sqrt{2\pi}(1-\eta)}\int_{\bR^3}F_\eps(t,x,v)(v\cdot n_x)^2dv\,.
\ea
$$
\end{Lem}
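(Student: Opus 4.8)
The plan is to follow the proof of Lemma 4.7 in \cite{BGP}, pitting the boundary entropy dissipation against a high-velocity moment, with $\eta$ playing the role of an interpolation parameter. First I would pass to the relative density $G_\eps=F_\eps/M=1+\eps g_\eps$ and restrict attention to the outgoing half-space $\{v\cdot n_x>0\}$, so that the left-hand side is, up to the prefactor $\a_\eps(x)$, the outgoing flux $\int_{\bR^3}G_\eps(v\cdot n_x)_+M\,dv$, while the Darrozes--Guiraud information is the (two-sided) boundary entropy flux $\cD\cG_\eps=\int_{\bR^3}h(\eps g_\eps)(v\cdot n_x)M\,dv$.

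The engine of the proof is an elementary scalar truncation inequality: for every $Z\ge0$ and $\eta\in(0,1)$ one has $Z\le\frac1{h(\eta)}h(Z-1)+\frac1{1-\eta}$, which I would verify by a short one-variable computation (the difference of the right-hand side and $Z$ has its minimum at $Z=e^{h(\eta)}$, where it is nonnegative). Applying this with $Z=G_\eps$, multiplying by $(v\cdot n_x)_+M$ and integrating splits the outgoing flux into an entropy-type piece $\frac1{h(\eta)}\int_{\bR^3}h(\eps g_\eps)(v\cdot n_x)_+M\,dv$ and an order-one remainder $\frac1{1-\eta}\int_{\bR^3}(v\cdot n_x)_+M\,dv$. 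This remainder is where the outgoing second moment enters: splitting the normal velocity at a fixed threshold sends the high speeds into $\frac1{(1-\eta)\sqrt{2\pi}}\int_{\bR^3}F_\eps(v\cdot n_x)^2\,dv$, and the equilibrium value of that moment already matches the order-one constant left by the low speeds, which is why the estimate can ultimately be phrased entirely in terms of the second moment.

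The entropy-type piece must then be turned into $\cD\cG_\eps$, and this is the delicate point. The Darrozes--Guiraud information only controls the deviation of $F_\eps$ from its reflection-average $\la\mu^\eps_{x,v},\cdot\ra$, so by itself it does not bound a bare one-sided $h$-integral. The way out is exactly the device of Lemma \ref{L-hL<DG}: using the lower bound (\ref{Cond2K}) I would split $\cK_\eps$ into $\a_\eps\b_\eps$ times the pure Maxwellian-flux kernel plus a nonnegative remainder $\cL_\eps$ still satisfying properties (i), (ii) and (iii'), and apply Jensen to $\cL_\eps$. This yields the coercive one-sided bound $\a_\eps\b_\eps\int_{\bR^3}(h(\eps g_\eps)-h(\eps\L_x g_\eps))(v\cdot n_x)_+M\,dv\le\cD\cG_\eps$; dropping the nonnegative $h(\eps\L_x g_\eps)$ term and remembering the overall prefactor $\a_\eps(x)$ on the left, the factor $\a_\eps$ cancels against $1/(\a_\eps\b_\eps)$ and leaves precisely the advertised coefficient $1/(h(\eta)\b_\eps)$ in front of $\cD\cG_\eps$.

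The main obstacle is therefore conceptual rather than computational: reconciling the one-sided outgoing flux with the two-sided, reflection-symmetric Darrozes--Guiraud information. A pointwise bound in $v$ fails near equilibrium, where $G_\eps\approx1$ makes both $h(\eps g_\eps)$ and the low-speed moment too small to absorb the flux; the mechanism that saves the argument is the guaranteed diffuse fraction $\a_\eps\b_\eps$ furnished by (\ref{Cond2K}), which alone produces a coercive, one-sided $h$-estimate. The remaining care is the book-keeping of the truncation parameter: since $h(\eta)\to0$ as $\eta\to0$ while $1/(1-\eta)$ stays bounded, $\eta$ must be left free at this stage and optimized only later, in Proposition \ref{P-BoundCtrol} and the global estimate, against the smallness of $\a_\eps/\eps$.
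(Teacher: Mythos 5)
Your overall architecture is the right one --- an elementary truncation inequality to split the outgoing flux into an entropy piece and a moment piece, followed by Lemma \ref{L-hL<DG} to convert the entropy piece into $\cD\cG_\eps/(\a_\eps\b_\eps)$, which is exactly how the paper concludes --- but the first step is carried out with the wrong reference state, and this creates a genuine gap. Applying $Z\le h(Z-1)/h(\eta)+1/(1-\eta)$ with $Z=G_\eps$ produces the one-sided entropy $\int h(\eps g_\eps)(v\cdot n_x)_+M\,dv$ \emph{relative to the Maxwellian}, whereas Lemma \ref{L-hL<DG} only controls the entropy \emph{relative to the outgoing average}, namely $\int(h(\eps g_\eps)-h(\eps\L_x g_\eps))(v\cdot n_x)_+M\,dv$. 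The two differ by $h(\eps\L_x g_\eps)/\sqrt{2\pi}\ge 0$, and since this term sits on the right-hand side of your upper bound you cannot ``drop'' it --- you would have to \emph{add} it. It is genuinely not controlled by the Darrozes--Guiraud information: for $F_\eps=cM$ with $c\ne 1$ (which is compatible with the boundary condition by property (iii)) one has $\cD\cG_\eps=0$ while $\int h(c-1)(v\cdot n_x)_+M\,dv=h(c-1)/\sqrt{2\pi}>0$; nor can the discrepancy be absorbed into the left-hand side, since $h(\L_xG_\eps-1)$ grows like $\L_xG_\eps\ln\L_xG_\eps$, superlinearly in the very flux you are trying to bound. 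The paper avoids this by quoting the inequality of Lemma 4.7 in \cite{BGP} directly in the form where the relative entropy is taken with respect to $\L_x(G_\eps)$, i.e. with $G_\eps\ln(G_\eps/\L_xG_\eps)-G_\eps+\L_xG_\eps$ in the integrand, which equals $h(\eps g_\eps)-h(\eps\L_xg_\eps)$ after integration against $(v\cdot n_x)_+M\,dv$ because $\int(G_\eps-\L_xG_\eps)(v\cdot n_x)_+M\,dv=0$.

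A second, related gap is your treatment of the remainder $\frac1{1-\eta}\int(v\cdot n_x)_+M\,dv=\frac1{(1-\eta)\sqrt{2\pi}}$: this is a pure constant, and replacing it by $\frac1{(1-\eta)\sqrt{2\pi}}\int F_\eps(v\cdot n_x)^2\,dv$ would require $\int F_\eps(v\cdot n_x)^2\,dv\ge 1$, which holds at equilibrium but is not available a priori (and cannot be assumed, since in the final estimate this moment is only known to be bounded \emph{above}). In the correct argument the second moment has to be produced by the decomposition itself, relative to $\L_xG_\eps$, not by majorizing a constant. As written, both the entropy piece and the moment piece of your estimate fail precisely in the regime $G_\eps\approx\mathrm{const}\ne 1$ where $\cD\cG_\eps$ degenerates, which is exactly the regime the lemma must handle.
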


\begin{proof}
Following the argument in Lemma 4.7 of \cite{BGP} shows that
$$
\ba
\int_{\bR^3}F_\eps(t,x,v)(v\cdot n_x)_+M(v)dv&
\\
\le\frac1{h(\eta)}\int_{\bR^3}\left(G_\eps\ln\left(\frac{G_\eps}{\L_x(G_\eps)}\right)-G_\eps+\L_x(G_\eps)\right)(t,x,v)(v\cdot n_x)_+M(v)dv&
\\
+\frac1{\sqrt{2\pi}(1-\eta)}\int_{\bR^3}F_\eps(t,x,v)(v\cdot n_x)^2dv&
\\
=\frac1{h(\eta)}\int_{\bR^3}(h(\eps g_\eps)-h(\eps\L_x(g_\eps)))(t,x,v)(v\cdot n_x)_+M(v)dv&
\\
+\frac1{\sqrt{2\pi}(1-\eta)}\int_{\bR^3}F_\eps(t,x,v)(v\cdot n_x)^2dv&\,.
\ea
$$
We conclude by applying Lemma \ref{L-hL<DG} to the first integral on the right hand side of the last equality above.
\end{proof}

Putting together the inequalities in Lemmas \ref{L-Young<}, \ref{L-hL<DG} and \ref{L-OutFlux} with $\eta=\tfrac12$ leads to the estimate in 
Proposition \ref{P-BoundCtrol}.

Next we return to (\ref{RelEntr<}), and observe that the boundary term can be absorbed by the Darrozes-Guiraud information, as follows. 

Pick a compactly supported vector field $w$ of class $C^1$ such that $\Div_xw=0$, $w\cdot n\rstr_{\d\Om}=0$ and let $K$ be a compact subset 
of $\bR^3$ such that $\Supp(w)\subset\bR_+\times K$. Set
$$
r_\eps:=\sup_{x\in K\cap\d\Om}\frac{\a_\eps(x)}{\eps}\quad\hbox{ and }\quad\b_*:=\inf_{0<\eps<1\atop x\in K\cap\d\Om}\b_\eps(x)>0\,;
$$
--- see assumptions (\ref{Cond-a})-(\ref{Cond-b}). Pick $N\ge 1$ such that $N\b_*>1$. It follows from Proposition \ref{P-BoundCtrol} that
\be\lb{RelEntr<Fin}
\ba
\frac1{\eps^2}H(F_\eps|\cM_{1,\eps w,1})(t)\le\frac1{\eps^2}H(F^{in}_\eps|\cM_{1,\eps w(0,\cdot),1})
	-\frac1{\eps^{4+q}}\int_0^t\int_\Om\cP_\eps(s,x)dxds&
\\
-\frac{1}{\eps^3}\left(1-\frac1{N\b_*}-\frac{\sqrt{2\pi}C(w,N)}{h(1/2)\b_*}\eps^2\right)\int_0^t\int_{\d\Om}\cD\cG_\eps(s,x)dS(x)ds&
\\
-\frac1{\eps^2}\int_0^t\iint_{\Om\times\bR^3}(v-\eps w(s,x))^{\otimes 2}:\grad_xw(s,x)F_\eps(s,x,v)dxdvds&
\\
-\frac1\eps\int_0^t\iint_{\Om\times\bR^3}(v-\eps w(s,x))\cdot E(w)(s,x)F_\eps(s,x,v)dvdxds&
\\
+2C(w,N)r_\eps\int_0^t\iint_{(\d\Om\cap K)\times\bR^3}F_\eps(s,x,v)(v\cdot n_x)^2dS(x)dvds&\,.
\ea
\ee

At this point, we recall that solutions of the initial boundary value problem (\ref{ScalBoltz})  Boltzmann equation satisfy the bound
$$
\sup_{\eps>0}\int_0^t\iint_{(\d\Om\cap K)\times\bR^3}F_\eps(s,x,v)(v\cdot n_x)^2dS(x)dvds<\infty
$$
for each $t>0$ and each compact $K\subset\bR^3$ --- see \cite{MasmLSR} and the proof of Theorem 4.1 d) in \cite{BGP}. 

Since $r_\eps\to 0$ as $\eps\to 0$ by our assumption (\ref{CondK})-(\ref{Cond-a}) on the reflection kernel $\cK_\eps$, and 
$$
\left(1-\frac1{N\b_*}-\frac{\sqrt{2\pi}C(w,N)}{h(1/2)\b_*}\eps^2\right)>0
$$
for all small enough $\eps>0$, the inequality (\ref{RelEntr<Fin}) is of the same form as the inequality stated as Theorem 5 in \cite{SRBoltzEul}. One 
then concludes by the same argument as in \cite{SRBoltzEul}.


\section{Conclusion and Final Remarks}


In this work, we have derived the incompressible Euler equations from the Boltzmann equation in the case of the initial boundary value problem, 
for a class of boundary conditions at the kinetic level of description that is more general than the case of Maxwell accomodation considered in
\cite{BGP}. However, the asymptotic regime for these boundary conditions considered here is the same as the one considered in \cite{BGP}: 
condition (\ref{CondK})-(\ref{Cond-a}) means that, in the vanishing $\eps$ limit, the tangential momentum of gas molecules tends to be conserved. 
In the case of Maxwell's accomodation condition at the boundary, this is the case precisely when the accomodation parameter vanishes, so that 
Maxwell's condition approaches specular reflection. The (unrealistic) specular reflection condition is the analogue for the Boltzmann equation 
of the (equally unrealistic) Navier full slip boundary condition, for which the inviscid limit of the Navier-Stokes equation is known to be described 
by the Euler equation. In that sense, Theorem \ref{T-Boltz>Eul} is the analogue for the Boltzmann equation of the (much simpler) Theorem
\ref{T-NS>Eul} already established in \cite{BGP}.

There is no single notion of accomodation parameter for boundary conditions defined by a reflection kernel as in (\ref{ScattBC}), as in the case
of Maxwell's condition. There is however a notion of accomodation parameter that can be defined for each mechanical quantity attached to
a gas molecule impinging on the material surface: see the discussion in section 8.3 in \cite{CIP}, and especially formulas (3.4) and (3.11) there.
The interested reader is invited to compare that definition of accomodation parameter with the condition (\ref{CondK})-(\ref{Cond-a}) used in 
Theorem \ref{T-Boltz>Eul}.

We have already stressed the striking analogy between our proofs of Theorems \ref{T-NS>Eul} and \ref{T-Boltz>Eul}. In order to go further in 
the analysis of the incompressible Euler limit of the Boltzmann equation in the presence of material boundaries, it seems natural to investigate
the similarities between the various terms that appear in the energy inequality (\ref{DiffEner}) in the Navier-Stokes case, and the relative 
entropy inequality (\ref{RelEntr<}) in the Boltzmann case. The entropy production rate $\eps^{-4-q}\cP_\eps$ in (\ref{RelEntr<}) is clearly the
analogue of the viscous dissipation rate $\eps|\Si(u_\eps)|^2$ (with the notation and scaling assumptions used in sections \ref{S-NS>E}
and \ref{S-NS2>E}: see Proposition 4.6 in \cite{BGL2}, and especially formula (4.18) there\footnote{This formula is established under the 
assumption of the Navier-Sokes scaling corresponding with $q=0$ in the notation of the present paper; extending it to the Euler scaling is 
straightforward. The reader is also invited to pay attention to the definition of $q$ in that reference, whose meaning is different than in the 
present paper.}. The analogy with the Darrozes-Guiraud information $\eps^{-3}\cD\cG_\eps$ is already somewhat less clear. The boundary 
friction term $\l_\eps|u_\eps|^2$ appearing in (\ref{DiffEner}) is partly analogous to the Darrozes-Guiraud --- but only partly so, since the 
Darrozes-Guiraud information also controls the departure of the distribution function of gas molecules impinging on the material boundary 
from thermodynamic equilibrium. For instance, the Darrozes-Guiraud information term is present except in the  case of specular reflection of 
gas molecules at the boundary, whereas the boundary friction term vanishes identically if the solutions of the Navier-Stokes equations are 
assumed to satisfy the Dirichlet boundary condition.

If the family of solutions $u_\eps$ of the Navier-Stokes equations (\ref{NSDir}) or (\ref{NSSlip}) converges to the solution of the Euler equations
(\ref{Eul}) as $\eps\to 0$, then
$$
\eps|\Si(u_\eps)|^2\to 0\hbox{ in }L^2((0,T)\times\Om)
$$
where $T$ is the life-time of the Euler solution, and, in the case where the Navier-Stokes solutions satisfy a slip boundary condition with slip
coefficient $\l_\eps$,
$$
\l_\eps|u_\eps\rstr_{\d\Om}|^2\to 0\hbox{ in }L^2((0,T)\times\d\Om)
$$
as $\eps\to 0$. (Indeed, the kinetic energy of the fluid is an invariant of the motion under the Euler dynamics.) Likewise, if the family $F_\eps$ of
solutions of (\ref{ScalBoltz}) satisfies
$$
\frac1\eps\int_{\bR^3}vF_\eps dv\to u\hbox{ and }\frac1{\eps^2}H(F_\eps|\cM_{(1,\eps u,1)})\to 0
$$
as $\eps\to 0$, where $u$ is the solution of (\ref{Eul}), then
$$
\frac1{\eps^{4+q}}\cP_\eps\to 0\hbox{ in }L^1((0,T)\times\Om)\hbox{ and }\frac1{\eps^3}\cD\cG_\eps\to 0\hbox{ in }L^1((0,T)\times\d\Om)\,.
$$
In the case where the Navier-Stokes solutions satisfy the Dirichlet condition, it is easily seen that the Navier-Stokes solutions converge to 
the Euler solution if and only if $\eps(\Si(u_\eps)\cdot n)_\tau\to 0$ in $\cD'((0,T)\times\d\Om))$. In Kato's paper \cite{KatoBL}, this term is 
controlled by only the viscous dissipation term, by a very clever argument involving the construction of a boundary layer different from the
Prandtl construction together with Hardy's inequality. Whether there is an analogue of this argument for the Boltzmann equation remains
an open question at the time of this writing.

Another issue related to the ones considered here is the incompressible Navier-Stokes limit of the Boltzmann equation with boundary
condition of the type (\ref{ScattBC}). At the formal level, this question is investigated in detail in \cite{SoneBk2}. However, the discussion in
\cite{SoneBk2} leaves aside the special case where the Navier-Stokes limit leads to a slip boundary condition. In the case of the BGK 
model with Maxwell's accomodation condition at the boundary, this problem has been treated (at the formal level) by Aoki-Inamuro-Onishi 
in \cite{Aoki}, by the Hilbert expansion method, completed with appropriate Knudsen layer terms. In the case of the Boltzmann equation, 
under a scaling assumption leading to the evolution Stokes equations, the slip boundary condition has been obtained by Masmoudi and 
Saint-Raymond \cite{MasmLSR} by a rigorous argument involving the same moment method as in \cite{BGL1,GL} together with the weak
(i.e. in the sense of distributions) formulation of the slip boundary condition. The Navier-Stokes analogue of this result has been obtained
(at the formal level) in section 3 of \cite{BGP}. 

While both approaches lead to the same value of the slip coefficient, a natural question is to study the Navier-Stokes limit of the Boltzmann 
equation with the most general boundary condition of the form (\ref{ScattBC}), and to identify under which condition(s) on the reflection 
kernel $\cK$, the limiting velocity field satisfies the slip boundary condition. In particular, one should check that the Aoki-Inamuro-Onishi 
theory and the weak formulation of the momentum equations lead to the same boundary condition in the vanishing $\eps$ limit. We hope 
to return to these questions in a future publication \cite{Golse}.

Finally, we conclude with remarks of a more technical nature, on the notion of solution of the initial boundary value problem for the Boltzmann
equation. A careful examination of the proof of Theorem \ref{T-Boltz>Eul} reveals that, in addition to the properties of solutions already used
in \cite{SRBoltzEul} in the absence of material boundaries, all that is needed is the relative variant of Boltzmann's H theorem (\ref{RelHThm}),
the weak form of the local conservation of mass (continuity equation) (\ref{LocConsMassDistrib}) and the weak form of the local conservation
of momentum (\ref{LocConsMomDistrib}). Recently, Mischler \cite{Mischler} constructed global, renormalized (in the sense of DiPerna-Lions
\cite{dPL}) solutions of the Boltzmann equation in (\ref{ScalBoltz}), satisfying (\ref{RelHThm}) and (\ref{LocConsMassDistrib}). However, the
boundary condition in (\ref{ScalBoltz}) should be replaced with the inequality
$$
F_\eps(t,x,v)|v\!\cdot\! n_x|\le\displaystyle\int_{v'\!\cdot n_x>0}\!F_\eps(t,x,v')v'\!\cdot\! n_x\cK_\eps(x,\cR_xv,dv')\,, \quad(x,v)\in\Ga_-\,.
$$ 
Equality in the boundary condition is known at least in the case of Maxwell's accomodation condition --- see Remark 6.4 in \cite{Mischler}.
The reason for these differences with what would be normally expected from classical solutions of (\ref{ScalBoltz}) is that Mischler's solutions 
are obtained by a very delicate compactness procedure in some weak topology. Whether these solutions are uniquely determined by their
initial data, or even satisfy the weak form of the local momentum conservation (\ref{LocConsMomDistrib}) remains yet unknown. On the other
hand, obtaining classical solutions of (\ref{ScalBoltz}) for all $\eps>0$ and for all compactly supported and solenoidal initial velocity fields 
$u^{in}$ of class $C^1$ in the domain $\Om$ and tangential on $\d\Om$ remains an open problem at the time of this writing. Since the only 
properties of solutions used in our proof of the Euler limit are (\ref{RelHThm}), (\ref{LocConsMassDistrib}) and (\ref{LocConsMomDistrib}), 
one could hope that some future refinement of Mischler's theory could be enough for our purposes. 

On the other hand, the Euler limit stated in Theorem \ref{T-Boltz>Eul} can be formulated in terms of dissipative solutions of the incompressible
Euler equations (see \cite{LionsBook1} for this notion, and \cite{BardTiti} for its extension to the initial boundary value problem). The main
advantage of such a formulation is that a) the definition of dissipative solutions corresponds exactly with the limiting form of the relative entropy 
inequality (\ref{RelEntr<}) as $\eps\to 0$, and b) for any initial velocity field with finite kinetic energy that is solenoidal and tangential on $\d\Om$, 
there exists at least one dissipative solution of the incompressible Euler equations that is defined for all times --- see Proposition 4.2 on p. 156 
in \cite{LionsBook1}. Besides, should the Euler equations have a classical solution, it is known that all dissipative solutions with the same initial 
data must coincide with that classical solution --- see Proposition 4.1 on p. 155 in \cite{LionsBook1}. We have refrained from using the notion of 
dissipative solutions in the present paper for the sake of simplicity, and refer the interested reader to \cite{BGP} where the result analogous to 
Theorem \ref{T-Boltz>Eul} in the special case of Maxwell's accomodation condition is formulated in terms of dissipative solutions of the
incompressible Euler equations.

\bigskip
\noindent
\textbf{Acknowledgments.} The author is grateful to Profs. K. Aoki, C. Bardos T.-P. Liu and M. Sammartino for several useful suggestions and 
remarks on the material presented in this paper.


\end{document}